\def\Red#1{\textcolor{red}{#1}}
\newcommand{\sign}[1]{\mbox{sign}(#1)}
\newtheorem{theorem}{Theorem}
\newtheorem{corollary}{Corollary}
\newtheorem{definition}{Definition}
\newtheorem{example}{Example}
\newtheorem{lemma}{Lemma}
\newtheorem{remark}{Remark}
\newtheorem{assumption}[theorem]{Assumption}
\def\sign{\hskip2pt{\rm sign}\hskip0pt}
\journal{Journal of the Franklin Institute}
\begin{document}

\begin{frontmatter}



\title{On predefined-time consensus protocols for dynamic networks
\footnote{\Red{This is the author's version of the accepted manuscript: R. Aldana-López, D. Gómez-Gutiérrez, E. Jiménez-Rodríguez, J. D. Sánchez-Torres and A. G. Loukianov, “On predefined-time consensus for dynamic networks”, Journal of the Franklin Institute, 2019, ISSN: 0016-0032. DOI: 10.1016/j.jfranklin.2019.11.058.
Please cite the publisher's version. For the publisher's version and full citation details see:
\url{https://doi.org/10.1016/j.jfranklin.2019.11.058}
}}
}


\author[label0]{R.~Aldana-L\'opez}
\ead{rodrigo.aldana.lopez@gmail.com}

\author[label0,label1]{David~Gómez-Gutiérrez\corref{cor1}}
\ead{David.Gomez.G@ieee.org}
\cortext[cor1]{Corresponding Author.}

\author[label2,label3]{E.~Jim\'enez-Rodr\'iguez}
\ead{ejimenezr@gdl.cinvestav.mx}

\author[label3]{J.~D.~S\'anchez-Torres}
\ead{dsanchez@iteso.mx}

\author[label2]{A.~G.~Loukianov}
\ead{louk@gdl.cinvestav.mx}

\address[label0]{Multi-agent autonomous systems lab, Intel Labs, Intel Tecnología de M\'exico, Av. del Bosque 1001, Colonia El Bajío, Zapopan, 45019, Jalisco, M\'exico.}
\address[label1]{Tecnologico de Monterrey, Escuela de Ingenier\'ia y Ciencias, Av. General Ram\'on Corona 2514, Zapopan, 45201, Jalisco, M\'exico.}
\address[label2]{CINVESTAV, Unidad Guadalajara, Av. del Bosque 1145, colonia el Baj\'io, Zapopan , 45019, Jalisco, M\'exico.}
\address[label3]{Research Laboratory on Optimal Design, Devices and Advanced Materials -OPTIMA-, Department of Mathematics and Physics, ITESO, Perif\'erico Sur Manuel G\'omez Mor\'in 8585 C.P. 45604, Tlaquepaque, Jalisco, M\'exico.}

\begin{abstract}
This paper presents new classes of consensus protocols with fixed-time convergence, which enable the definition of an upper bound for consensus state as a parameter of the consensus protocol, ensuring its independence from the initial condition of the nodes. We demonstrate that our methodology subsumes current classes of fixed-time consensus protocols that are based on homogeneous in the bi-limit vector fields. Moreover, the proposed framework enables for the development of independent consensus protocols that are not needed to be homogeneous in the bi-limit. This proposal offers extra degrees of freedom to implement consensus algorithms with enhanced convergence features, such as reducing the gap between the real convergence moment and the upper bound chosen by the user. We present two classes of fixed-time consensus protocols for dynamic networks, consisting of nodes with first-order dynamics, and provide sufficient conditions to set the upper bound for convergence a priori as a consensus protocol parameter. The first protocol converges to the average value of the initial condition of the nodes, even when switching among dynamic networks. Unlike the first protocol, which requires, at each instant, an evaluation of the non-linear predefined time-consensus function, hereinafter introduced, per neighbor, the second protocol requires only a single evaluation and ensures a predefined time-consensus for static topologies and fixed-time convergence for dynamic networks. Predefined-time convergence is proved using Lyapunov analysis, and simulations are carried out to illustrate the performance of the suggested techniques. The exposed results have been applied to the design of predefined time-convergence formation control protocols to exemplify their main features. 
\end{abstract}

\begin{keyword}
Predefined-time convergence, fixed-time consensus, multi-agent systems, average consensus, self-organizing systems
\end{keyword}

\end{frontmatter}

\section{Introduction}

Consensus algorithms allow a network of agents to agree on a value for its internal state in a distributed fashion by using only communication among neighbors~\cite{Olfati-Saber2007}. For this reason, they have attracted a great deal of attention in the field of automatic control, self-organizing systems and sensor networks~\cite{liu2017reliable}, with applications, for instance, to flocking~\cite{Olfati2006}, formation control~\cite{Oh2015,Ren2007,Li2013}, distributed resource allocation~\cite{Xu2017,Xu2017b}, distributed map formation~\cite{Aragues2012} and reliable filter design for sensor networks with random failures~\cite{liu2017reliable}.

For agents with first-order dynamics, a consensus protocol with asymptotic convergence to the average value of the initial conditions of the node has been proposed in~\cite{Olfati-Saber2007}. Using the stability results of the switching systems~\cite{Liberzon2003} it can be shown that such protocols reach a consensus even on dynamic networks by arbitrarily switching between highly connected graphs~\cite{Olfati-Saber2007,Cai2014}. Consensus protocols with enhanced convergence properties have been suggested based on finite-time~\cite{Bhat2005,Utkin1999}, and fixed-time~\cite{Polyakov2012,Parsegov2012} stability theory.

In~\cite{Sayyaadi2011,Shang2012,Wang2010,zhu2013,Franceschelli2013,Gomez-Gutierrez2018} continuous and discontinuous protocols with finite-time convergence were proposed. However, the convergence-time is an unbounded function of the initial conditions. A remarkable extension of the previous methods is the fixed-time convergent consensus. In this case, there exists a bound for the convergence time that is independent of the initial conditions \cite{Andrieu2008,Polyakov2012}. Therefore, for the design of high-performance consensus protocols, the fixed-time convergence is a desirable property. Several consensus protocol have been proposed based on the fixed-time stability results from~\cite{Polyakov2012,Parsegov2012}, see e.g.,~\cite{Zuo2014,Defoort2015,Parsegov2013,Sharghi2016,Hong2017,Ning2018,Wang2017b}. However, these consensus protocols have been justified only for static networks. Another fixed-time consensus algorithm was proposed by~\cite{Zuo2014a}, which is a consensus protocol for dynamic networks. However, similar to~\cite{Sharghi2016,Hong2017}, the convergence analysis is based on the upper estimate of the convergence-time given in~\cite{Polyakov2012}, which is known to be a too conservative upper bound~\cite{Aldana-Lopez2019}. 

Recently, to enable the application of fixed-time consensus algorithms in scenarios with time constraints, there has been an effort in finding the least upper bound of the settling time function of the class of fixed-time stable systems given in~\cite[Lemma~1]{Polyakov2012}. First, in~\cite{Parsegov2012} the least upper bound was found for a subclass of systems, which has lead to consensus protocols for dynamic networks such as~\cite{Parsegov2013,Zuo2014,Ni2017,Ning2017b,Wang2017b}. Recently, in~\cite{Aldana-Lopez2019} the least upper bound for the settling time was found for the general class of fixed-time stable systems given in~\cite[Lemma~1]{Polyakov2012}, based on this result in~\cite{AldanaConsensus2019} consensus protocols for dynamic networks, subsuming those given in~\cite{Ni2017,Ning2017b}, were presented, where an upper bound of the convergence time can be set a priori as a parameter of the protocol, in view of this feature, this class of consensus algorithms is known as predefined-time consensus.

Another approach to derive predefined-time consensus algorithms has been addressed via a linear function of the sum of the errors between neighboring nodes together with a time-varying gain, for instance, using time base generators~\cite{Morasso1997}, see e.g.,~\cite{Yong2012,Liu2018,Wang2017,Wang2018,Colunga2018b,Zhao2018,Ning2019}. However, these methods require that all nodes have a common time-reference because the same value of the time-varying gain should be applied to all nodes. Thus, this approach is not suitable in GPS-denied environments or in scenarios where having a common time reference is a strong assumption. Moreover, such time-varying gain becomes singular at the pre-set time, either because the gain goes to infinite as the time tends to the pre-set time~\cite{Yong2012,Zhao2018} or because it produces Zeno behavior (infinite number of switching in a finite-time interval) as the time tends to the pre-set time~\cite{Liu2018}.

\section{Contribution}
This paper aims to provide new classes of consensus protocols, to obtain fixed-time convergence in dynamic networks arbitrarily switching among connected topologies. Sufficient conditions are derived, such that the upper bound for the convergence time is selected a priori as a parameter of the protocol. Such protocols are referred as predefined-time consensus protocols.

Two classes of consensus protocols for networks composed of nodes with first-order dynamics are proposed. The first one is presented to solve the average-consensus problem with predefined convergence under dynamic networks. The second protocol is shown to have predefined-time convergence on static networks and fixed-time convergence under dynamic networks. Unlike the first protocol that requires, at each time instant, one evaluation of the nonlinear predefined-time consensus function (hereinafter introduced) per neighbors, the second protocol only requires a single evaluation, with the trade-off of not ensuring the convergence to the average value of the nodes' initial conditions.

Contrary to consensus protocols with predefined convergence based on time-varying gains as in~\cite{Yong2012,Liu2018,Wang2017,Wang2018,Colunga2018b}, the proposed classes of protocols does not require the strong assumption of a common time-reference for all nodes. Moreover, unlike autonomous fixed-time consensus protocols~\cite{Ning2017b,Wang2017b}, which are based on a subclass of the fixed-time stable systems given in~\cite[Lemma~1]{Polyakov2012}, which uses homogeneous in the bi-limit~\cite{Andrieu2008} vector fields, in this paper a methodology for the design of new consensus protocols is presented, showing that predefined-time consensus can be achieved with a broader class of consensus protocols, that are not required to be homogeneous in the bi-limit. This result provides extra degrees of freedom to select a protocol, for instance, to reduce the slack between the predefined upper bound for the convergence and the exact convergence time. This methodology generalizes the recent results~\cite{Ning2017b,AldanaConsensus2019} on fixed-time consensus for dynamic networks formed by agents with first-order dynamics.

The rest of the paper is organized as follows. Section~\ref{Sec.Preliminaries} introduces the preliminaries on graph theory and predefined-time stability. Section~\ref{Sec.MainResult} presents two new classes of consensus protocols with predefined-time convergence together with illustrative examples showing the performance of the proposed approach.  In Section~\ref{Sec:Formation} these results are applied to the design of formation control protocols with predefined-time convergence.
Finally, Section~\ref{Sec.Conclu} provides the concluding remarks and discusses future work.

\section{Preliminaries}
\label{Sec.Preliminaries}

\subsection{Graph Theory}
\label{SubSec.GraphTheory}
The following notation and preliminaries on graph theory are taken mainly from~\cite{godsil2001}.

An undirected graph $\mathcal{X}$ consists of a vertex set $\mathcal{V}(\mathcal{X})$ and an edge set $\mathcal{E}(\mathcal{X})$ where an edge is an unordered pair of distinct vertices of $\mathcal{X}$. Writing $ij$ denotes an edge, and $j\sim i$ denotes that the vertex $i$ and vertex $j$ are adjacent or neighbors, i.e., there exists an edge $ij$. The set of neighbors vertex of $i$ in the graph $\mathcal{X}$ is expressed by $\mathcal{N}_i(\mathcal{X})=\{j:ji\in \mathcal{E}(\mathcal{X})\}$. A path from $i$ to $j$ in a graph is a sequence of distinct vertices starting with $i$ and ending with $j$ such that consecutive vertices are adjacent. If there is a path between any two vertices of the graph $\mathcal{X}$ then $\mathcal{X}$ is said to be connected. Otherwise, it is said to be disconnected.  

A weighted graph is a graph together with a weight function $\mathcal{W}:\mathcal{E}(\mathcal{X})\to \mathbb{R}_{+}$.
If $\mathcal{X}$ is a weighted graph such that $ij\in\mathcal{E}(\mathcal{X})$ has weight $a_{ij}$ and  $n=|\mathcal{V}(\mathcal{X})|$. Then the incidence matrix $D(\mathcal{X})$ is a $\vert \mathcal{V}(\mathcal{X})\vert \times \vert \mathcal{E}(\mathcal{X})\vert$ matrix, such that if $ij\in \mathcal{E}(\mathcal{X})$ is an edge with weight $a_{ij}$ then the column of $D$ corresponding to the edge $ij$ has only two nonzero elements: the $i-$th element is equal to $\sqrt{a_{ij}}$ and the $j-$th element is equal to $-\sqrt{a_{ij}}$. Clearly, the incidence matrix $D(\mathcal{X})$, satisfies $\mathbf{1}^TD(\mathcal{X})=0$. The Laplacian of $\mathcal{X}$ is denoted by $\mathcal{Q}(\mathcal{X})$ (or simply $\mathcal{Q}$ when the graph is clear from the context) and is defined as $\mathcal{Q}(\mathcal{X})=D(\mathcal{X})D(\mathcal{X})^T$. The Laplacian matrix $\mathcal{Q}(\mathcal{X})$ is a positive semidefinite and symmetric matrix. Thus, its eigenvalues are all real and non-negative.

When the graph $\mathcal{X}$ is clear from the context we omit $\mathcal{X}$ as an argument. For instance we write $Q$, $D$, etc to represent the Laplacian, the incidence matrix, etc.

\begin{lemma}~\cite{godsil2001}
\label{lemma:Lambda2}
Let $\mathcal{X}$ be a connected graph and $\mathcal{Q}$ its Laplacian. The eigenvalue $\lambda_1(\mathcal{Q})=0$ has algebraic multiplicity one with eigenvector $\mathbf{1}=[1\ \cdots\ 1]^T$. The smallest nonzero eigenvalue of $\mathcal{Q}$, denoted by $\lambda_2(\mathcal{Q})$ satisfies that $\lambda_2(\mathcal{Q})=\underset{x\perp \mathbf{1},x\neq 0}{\min}\dfrac{x^T \mathcal{Q}x}{x^Tx}$. 
\end{lemma}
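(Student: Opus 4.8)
The plan is to assemble three standard facts about the Laplacian, using only that $\mathcal{Q}=DD^T$ with $\mathbf{1}^TD=0$ and that $\mathcal{X}$ is connected. First I would verify that $0$ is an eigenvalue with eigenvector $\mathbf{1}$: since $D^T\mathbf{1}=(\mathbf{1}^TD)^T=0$, we get $\mathcal{Q}\mathbf{1}=D(D^T\mathbf{1})=0$. Because $\mathcal{Q}$ is positive semidefinite (already noted in the preliminaries), all its eigenvalues are nonnegative, so $0$ is in fact the smallest eigenvalue, i.e. $\lambda_1(\mathcal{Q})=0$.

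Next, to establish that the algebraic multiplicity of $\lambda_1$ is one, I would use the symmetry of $\mathcal{Q}$, which makes algebraic and geometric multiplicity coincide, and then compute the kernel directly. If $\mathcal{Q}x=0$ then $0=x^T\mathcal{Q}x=\norm{D^Tx}^2$, hence $D^Tx=0$; reading off the column of $D$ associated with an edge $ij$ gives $\sqrt{a_{ij}}\,(x_i-x_j)=0$, i.e. $x_i=x_j$ whenever $i\sim j$. Propagating this equality along paths and invoking connectedness of $\mathcal{X}$ forces $x$ to be constant over $\mathcal{V}(\mathcal{X})$, so $\ker\mathcal{Q}=\operatorname{span}\{\mathbf{1}\}$ is one-dimensional.

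Finally, for the variational characterization of $\lambda_2(\mathcal{Q})$, I would invoke the spectral theorem: pick an orthonormal eigenbasis $v_1=\mathbf{1}/\sqrt{n},v_2,\dots,v_n$ with $\mathcal{Q}v_k=\lambda_k(\mathcal{Q})v_k$ and $0=\lambda_1<\lambda_2\le\cdots\le\lambda_n$, where the strict inequality $\lambda_1<\lambda_2$ is precisely the multiplicity-one claim just proved. Any $x\perp\mathbf{1}$ with $x\neq 0$ expands as $x=\sum_{k\ge 2}c_kv_k$, whence $x^T\mathcal{Q}x=\sum_{k\ge 2}\lambda_k(\mathcal{Q})\,c_k^2\ge\lambda_2(\mathcal{Q})\sum_{k\ge 2}c_k^2=\lambda_2(\mathcal{Q})\,x^Tx$; taking $x=v_2$ shows the bound is attained, which yields the stated minimum.

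The routine parts are the linear-algebraic manipulations; the only place that needs genuine care is the kernel computation, where connectedness must be used to pass from ``$x$ agrees across each edge'' to ``$x$ is globally constant.'' This is the one truly graph-theoretic step and the point at which the hypothesis that $\mathcal{X}$ is connected (as opposed to merely having no isolated vertex) is indispensable.
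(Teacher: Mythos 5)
Your proposal is correct and complete: the paper itself offers no proof of this lemma, citing it directly from Godsil and Royle, and your argument (computing $\mathcal{Q}\mathbf{1}=DD^T\mathbf{1}=0$, identifying $\ker\mathcal{Q}$ via $x^T\mathcal{Q}x=\Vert D^Tx\Vert^2$ and connectedness, then applying the Rayleigh-quotient characterization on $\mathbf{1}^\perp$) is exactly the standard proof found in that reference. You correctly isolate the one nontrivial step, namely using connectedness to pass from edgewise equality to global constancy of $x$, so nothing is missing.
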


It follows from Lemma~\ref{lemma:Lambda2} that for every $x\bot\mathbf{1}$, $x^T\mathcal{Q}x\geq \lambda_2(\mathcal{Q}) \Vert x \Vert_2^2>0$. $\lambda_2(\mathcal{Q}(\mathcal{X}))$ is known as the algebraic connectivity of the graph $\mathcal{X}$. 

\begin{definition}
A switched dynamic network $\mathcal{X}_{\sigma(t)}$ is described by the ordered pair $\mathcal{X}_{\sigma(t)}=\langle\mathcal{F},\sigma\rangle$ where $\mathcal{F}=\{\mathcal{X}_1,\ldots,\mathcal{X}_m\}$ is a collection of graphs having the same vertex set and $\sigma:[t_0,\infty)\rightarrow \{1,\ldots m\}$ is a switching signal determining the topology of the dynamic network at each instant of time. 
\end{definition}

In this paper, we assume that $\sigma(t)$ is generated exogenously and that there is a minimum dwell time between consecutive switchings in such a way that Zeno behavior in network's dynamic is excluded, i.e., there is a finite number of switchings in any finite interval. Notice that, no maximum dwell time is set, thus the system may remain under the same topology during its evolution.

\subsection{Fixed-time stability with predefined upper bound for the settling time}
The preliminaries on predefined-time stability are taken from~\cite{aldana2019design}.

Consider the system
\begin{equation}\label{eq:sys}
    \dot{x}=-\frac{1}{T_c}f(x), \ \forall t\geq t_0, \ f(0)=0,\ \ x(t_0)=x_0,
\end{equation}
where $x\in\mathbb{R}^n$ is the state of the system, $T_c>0$ is a parameter and $f:\mathbb{R}^n\to\mathbb{R}^n$ is nonlinear, continuous on $x$ everywhere except, perhaps, at the origin. 

We assume that $f(\cdot)$
is such that the origin of~\eqref{eq:sys} is asymptotically stable and, except at the origin, \eqref{eq:sys} has the properties of existence and uniqueness of solutions in forward-time on the interval $[t_0,+\infty)$.
The solution of \eqref{eq:sys} with initial condition $x_0$ is denoted by $x(t;x_0)$.

\begin{definition}\cite{Polyakov2014}(Settling-time function)
The \textit{settling-time function} of system~\eqref{eq:sys} is defined as
$T(x_0,t_0)=\inf\{\xi\geq t_0: \lim_{t\to\xi}x(t;x_0)=0\}-t_0$.
\end{definition}

\begin{definition}\cite{Polyakov2014} \label{def:fixed}(Fixed-time stability) 
System \eqref{eq:sys} is said to be \textit{fixed-time stable} if it is asymptotically stable~\cite{Khalil2002} and the settling-time function $T(x_0,t_0)$ is bounded on  $\mathbb{R}^n\times\mathbb{R}_+$, i.e. there exists $T_{\text{max}}\in\mathbb{R}_+\setminus\{0\}$ such that $T(x_0,t_0)\leq T_{\text{max}}$ if $t_0\in\mathbb{R}_+$ and $x_0\in\mathbb{R}^n$. Thus, $T_{\text{max}}$ is an Upper Bound of the Settling Time (\textit{UBST}) of $x(t;x_0)$.
\end{definition}

\begin{assumption}
\label{Assump:AsympSys}
Let $\Psi(z)=\Phi(|z|)^{-1}\sign(z)$, with $z\in\mathbb{R}$, where  $\Phi:\mathbb{R}_+\to\Bar{\mathbb{R}}_+\setminus\{0\}$ is a function satisfying $\Phi(0)=+\infty$, $\forall z\in\mathbb{R}_+\setminus\{0\}$, $\Phi(z)<+\infty$ and
\begin{equation}
\label{Eq:Finite_Improper}
     \int_0^{+\infty} \Phi(z)dz = 1.
\end{equation}

\end{assumption}

\begin{lemma}\label{Lemma:TimeScale} 
Let $\Psi(z)$ and be a function satisfying Assumption~\ref{Assump:AsympSys}, then, the system
\begin{equation}
\label{Eq:TSFunc}
    \dot{x}=-\frac{1}{T_c}\Psi(x), \ \ x(t_0)=x_0,
\end{equation}
is asymptotically stable and the least \textit{UBST} function $T(x_0)$ is given by
\begin{equation}
\label{Eq:Time_integral1}    
\sup_{x_0 \in \mathbb{R}^n} T(x_0)=T_c.
\end{equation}
\end{lemma}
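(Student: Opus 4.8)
The plan is to exploit that, since $\Psi:\mathbb{R}\to\mathbb{R}$, equation \eqref{Eq:TSFunc} is effectively a scalar ODE whose right-hand side is odd and points toward the origin, so it can be integrated in closed form. First I would establish asymptotic stability: taking $V(x)=x^2/2$, along solutions of \eqref{Eq:TSFunc} one has $\dot V = -\tfrac{1}{T_c}x\Psi(x) = -\tfrac{1}{T_c}|x|\,\Phi(|x|)^{-1}<0$ for $x\neq 0$ (using $x\sign(x)=|x|$ and $0<\Phi(|x|)<\infty$ off the origin from Assumption~\ref{Assump:AsympSys}). Together with the standing hypotheses of continuity of $\Psi$ away from the origin and forward existence/uniqueness of solutions, this yields Lyapunov asymptotic stability of the origin and also shows that $|x(t;x_0)|$ is strictly decreasing while nonzero; hence it suffices to treat $x_0>0$ and transfer the conclusion to $x_0<0$ by oddness of $\Psi$.

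Next I would introduce $G(s)=\int_0^s\Phi(r)\,dr$ for $s\ge 0$. By Assumption~\ref{Assump:AsympSys}, $\Phi$ is positive and continuous on $(0,\infty)$ with $\int_0^\infty\Phi=1$, so $G$ is well defined and finite, continuous, strictly increasing, $G(0)=0$, $\lim_{s\to\infty}G(s)=1$, $G'=\Phi$ on $(0,\infty)$, and $G$ admits a continuous strictly increasing inverse $G^{-1}:[0,1)\to[0,\infty)$ with $G^{-1}(0)=0$. For $x_0>0$ I would then exhibit the solution explicitly: set $y(t)=G^{-1}\!\big(G(x_0)-\tfrac{t-t_0}{T_c}\big)$ on $[t_0,\,t_0+T_cG(x_0))$ and $y(t)=0$ afterwards. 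The chain rule gives $\dot y=\tfrac{1}{G'(y)}\cdot\big(-\tfrac{1}{T_c}\big)=-\tfrac{1}{T_c}\Phi(y)^{-1}=-\tfrac{1}{T_c}\Psi(y)$, so $y$ solves \eqref{Eq:TSFunc} with $y(t_0)=x_0$; by forward uniqueness, $y(\cdot)=x(\cdot;x_0)$. Since $G^{-1}$ is continuous with $G^{-1}(0)=0$ and $x(t;x_0)>0$ strictly on $[t_0,\,t_0+T_cG(x_0))$, we obtain $x(t;x_0)\to 0$ exactly as $t\to(t_0+T_cG(x_0))^-$, whence $T(x_0)=T_c\,G(x_0)=T_c\int_0^{x_0}\Phi(r)\,dr$, and by oddness $T(x_0)=T_c\int_0^{|x_0|}\Phi(r)\,dr$ for all $x_0$.

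Finally I would read off the supremum. Because $\Phi>0$, the map $x_0\mapsto G(|x_0|)$ is strictly increasing in $|x_0|$ and bounded by $\int_0^\infty\Phi=1$, so $T(x_0)<T_c$ for every $x_0$, i.e. $T_c$ is a UBST; and for any $\varepsilon\in(0,T_c)$ the limit $G(s)\to 1$ provides some $x_0$ with $T(x_0)=T_cG(|x_0|)>T_c-\varepsilon$, so no smaller constant bounds $T$. Hence $\sup_{x_0}T(x_0)=T_c$ as in \eqref{Eq:Time_integral1}, and \eqref{Eq:TSFunc} is fixed-time stable with least UBST $T_c$.

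The main obstacle is the rigor of the middle step: justifying that the closed form $y(t)$ is genuinely the (unique) solution and that it reaches the origin precisely at $t_0+T_cG(x_0)$ rather than stalling at a positive limit. This is exactly where continuity of $\Phi$ on $(0,\infty)$ (so that $G'=\Phi$ and $y$ is differentiable with the claimed derivative) and the assumed forward existence/uniqueness enter. If one prefers not to exhibit the solution, an alternative is to note that $|x(t;x_0)|$ is decreasing and bounded below, that a strictly positive limit would keep $|\dot x|$ bounded away from $0$ on a compact interval and hence force $x$ to cross zero in finite time (a contradiction), so $x\to 0$ at some finite $T(x_0)$, and then to separate variables on $[t_0,t_0+T(x_0))$ to get $G(x_0)-G(x(t))=(t-t_0)/T_c$ and let $t\uparrow t_0+T(x_0)$.
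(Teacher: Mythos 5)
Your proof is correct. Note that the paper itself offers no proof of this lemma: it is imported verbatim from the cited reference \cite{aldana2019design}, so there is no in-paper argument to compare against. Your separation-of-variables computation is the standard one and matches what that reference does: for $x_0>0$ the identity $\frac{d}{dt}G(x(t))=\Phi(x)\dot x=-1/T_c$ with $G(s)=\int_0^s\Phi(r)\,dr$ gives $T(x_0)=T_c\,G(|x_0|)$, and then $\sup_{x_0}T(x_0)=T_c\int_0^{+\infty}\Phi(r)\,dr=T_c$ by \eqref{Eq:Finite_Improper}, with the supremum approached but not attained. You correctly isolate the two delicate points: (a) finiteness of $G$ near $0$ despite $\Phi(0)=+\infty$, which is exactly what the integrability hypothesis \eqref{Eq:Finite_Improper} guarantees and what makes the settling time finite even though $\Psi$ may vanish at the origin; and (b) that the trajectory reaches zero precisely at $t_0+T_cG(x_0)$ rather than stalling, which your explicit formula $x(t)=G^{-1}\bigl(G(x_0)-(t-t_0)/T_c\bigr)$ settles, relying on the standing continuity and forward-uniqueness assumptions on \eqref{eq:sys}. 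Two cosmetic remarks: the $\sup_{x_0\in\mathbb{R}^n}$ in the statement should be read as $x_0\in\mathbb{R}$ since $\Psi$ is scalar (your treatment of the scalar case plus oddness is the right reading), and your Lyapunov step for asymptotic stability, while fine, is subsumed by the explicit solution, which already shows monotone convergence to the origin.
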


\begin{theorem} (Lyapunov characterization for fixed-time stability with predefined \textit{UBST})
\label{thm:weak_pt} 
If there exists a continuous positive definite radially unbounded function $V:\mathbb{R}^n\to\mathbb{R}$, such that its time-derivative along the trajectories of~\eqref{eq:sys} satisfies
\begin{equation}\label{eq:dV_weak}
\dot{V}(x)\leq-\frac{1}{T_c}\Psi(V(x)),  \ \  x\in\mathbb{R}^n\setminus\{0\},
\end{equation}  
where $\Psi(z)$ satisfies Assumption~\ref{Assump:AsympSys}, then, system~\eqref{eq:sys} is fixed-time stable with $T_c$ as the predefined \textit{UBST}. 
\end{theorem}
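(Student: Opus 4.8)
The plan is to run a scalar comparison argument against the one-dimensional system of Lemma~\ref{Lemma:TimeScale}. Fix an arbitrary $x_0\in\mathbb{R}^n\setminus\{0\}$ and set $v(t):=V(x(t;x_0))$, the evolution of $V$ along the corresponding solution of~\eqref{eq:sys}. Note first that, by Assumption~\ref{Assump:AsympSys}, $\Phi(z)\in(0,+\infty)$ for $z>0$ while $\Phi(0)=+\infty$, so $\Psi(z)>0$ for every $z>0$ and $\Psi(0)=0$; hence~\eqref{eq:dV_weak} gives $\dot V(x)<0$ for all $x\neq 0$. Combined with the hypothesis that $V$ is continuous, positive definite and radially unbounded, this already yields (by the Barbashin--Krasovskii / Lyapunov theorem) that the origin of~\eqref{eq:sys} is globally asymptotically stable, so by Definition~\ref{def:fixed} it only remains to produce a uniform bound on the settling-time function.

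Next I would derive the scalar differential inequality for $v$: along solutions of~\eqref{eq:sys} the map $v$ is locally absolutely continuous, and its upper right Dini derivative satisfies $D^+v(t)\le-\tfrac{1}{T_c}\Psi(v(t))$ at every $t$ for which $x(t;x_0)\neq 0$, directly from~\eqref{eq:dV_weak} evaluated along the trajectory. I would then invoke the comparison principle (in the form valid for Dini derivatives): letting $y(\cdot)$ be the solution of the scalar problem $\dot y=-\tfrac{1}{T_c}\Psi(y)$ with $y(t_0)=V(x_0)$ --- which is well posed with forward-unique solutions, since~\eqref{Eq:TSFunc} is the instance $f=\Psi$ of~\eqref{eq:sys} and thus inherits the standing existence/forward-uniqueness assumptions made just after~\eqref{eq:sys} --- one gets $0\le v(t)\le y(t)$ for all $t\ge t_0$.

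To close the argument I would apply Lemma~\ref{Lemma:TimeScale} to $y$: system~\eqref{Eq:TSFunc} is asymptotically stable and, by~\eqref{Eq:Time_integral1}, its settling time from any initial value is at most $T_c$; concretely, separating variables shows the time for $y$ to reach $0$ equals $T_c\int_0^{V(x_0)}\Phi(z)\,dz\le T_c\int_0^{+\infty}\Phi(z)\,dz=T_c$ by~\eqref{Eq:Finite_Improper}. Hence $y(t)=0$ for all $t\ge t_0+T_c$, so $v(t)=V(x(t;x_0))=0$ there, and positive definiteness of $V$ forces $x(t;x_0)=0$ for every $t\ge t_0+T_c$. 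Since $x_0$ was arbitrary and the bound $T_c$ depends neither on $x_0$ nor on $t_0$, the settling-time function obeys $T(x_0,t_0)\le T_c$ on $\mathbb{R}^n\times\mathbb{R}_+$; with the global asymptotic stability established above, Definition~\ref{def:fixed} then gives fixed-time stability, and because $T_c$ appears explicitly as a parameter of~\eqref{eq:sys} it is the \emph{predefined} \textit{UBST}.

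The step I expect to be the main obstacle is the rigorous justification of the comparison inequality $v(t)\le y(t)$: $V$ is only assumed continuous (not $C^1$), and $\Psi$ need not be Lipschitz and may be discontinuous at the origin, so the classical smooth comparison lemma does not apply verbatim. The care required is to work with Dini derivatives, to restrict the comparison to the region away from the origin, and to patch the estimate at the instant the comparison trajectory reaches $0$; the hypotheses of the theorem --- in particular that~\eqref{Eq:TSFunc} inherits the forward existence/uniqueness used in Lemma~\ref{Lemma:TimeScale} --- are exactly what make the comparison principle legitimate here.
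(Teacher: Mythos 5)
The paper does not prove Theorem~\ref{thm:weak_pt} itself --- it is stated as a preliminary imported from the cited reference --- but your comparison-principle argument is exactly the standard route that source takes: establish global asymptotic stability from $\dot V<0$, dominate $V(x(t))$ by the solution of the scalar system~\eqref{Eq:TSFunc} with initial value $V(x_0)$, and read off the uniform bound $T_c$ from Lemma~\ref{Lemma:TimeScale} via $T_c\int_0^{V(x_0)}\Phi(z)\,dz\le T_c$. Your argument is correct, and you rightly flag the only genuinely delicate step (the Dini-derivative form of the comparison lemma for a non-Lipschitz $\Psi$ that is discontinuous only at the origin, patched at the instant the comparison solution hits zero); handled as you describe, the proof is complete.
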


\section{Main Result}
\label{Sec.MainResult}

It is assumed a multi-agent system composed of $n$ agents, whcih are able to communicate with its neighbors according to a communication topology given by the switching dynamic networks $\mathcal{X}_{\sigma(t)}$.
The $i-$th agent dynamics is given by
$$
\dot{x}_i=u_i
$$
where $u_i$ is called the consensus protocol.  The aim of the paper is to introduce new classes of consensus protocols for dynamic networks as well as to provide the conditions under which, using only information from the neighbors $\mathcal{N}_i(\mathcal{X}_{\sigma(t)})$, the convergence is guaranteed in a predefined-time. 

\begin{definition}
\label{Def:ConsensusFunction}
Let $\Omega:\mathbb{R}\to\mathbb{R}$ be a monotonically increasing function 
satisfying 
\begin{itemize}
    \item there exist a function $\hat{\Omega}:\mathbb{R}_+\to\mathbb{R}_+$, a non-increasing function $\beta:\mathbb{N}\to\mathbb{R}_+$ and $d\geq 1$, such that for all $x=(x_1,\dots,x_n)^T$, $x_i\in\mathbb{R}_+$, the inequality
    \begin{equation}
    \hat{\Omega}\left(\beta(n) \|x\|_2\right)\leq\beta(n)^d\sum_{i=1}^n\Omega(x_i)
    \label{convex_degree}
\end{equation}
    holds, where $\|x\|_2 =  \left(\sum_{i=1}^n|x_i|^2\right)^{1/2}$.
    
    \item $\Psi(z) = z^{-1}\hat{\Omega}(|z|)$ satisfies Assumption~\ref{Assump:AsympSys}.

\end{itemize}
then, $\Omega(\cdot)$ is called a predefined-time consensus function.
\end{definition}

\begin{lemma}
\label{lemma:convex_subhomo}
Let $\Omega:\mathbb{R}\to\mathbb{R}$ be a monotonically increasing function, then if it satisfies either
\begin{itemize}
    \item \textit{i}) $\Omega(x+y)\leq\Omega(x)+\Omega(y)$, i.e. $\Omega(z)$  sub-additive, and $\Omega(\mu x)\leq\mu^d\Omega(x)$ for $\mu\in[0,1]$ and $d\geq 1$, i.e. $\Omega(z)$  sub-homogeneous of degree $d$.
    \item \textit{ii}) $\Omega(z)$ convex.
\end{itemize}
Then, $\Omega(z)$  complies with \eqref{convex_degree} for  $\beta(n)=n^{-1}$, $\hat{\Omega}(z)=\Omega(z)$ and $d$ the degree of sub-homogeneity for \textit{i}) and $d=1$ for \textit{ii}).
\end{lemma}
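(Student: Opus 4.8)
The plan is to verify \eqref{convex_degree} directly with the proposed data $\beta(n)=n^{-1}$, $\hat{\Omega}=\Omega$, so that the claim reduces to proving, for every integer $n\geq 1$ and every $x=(x_1,\dots,x_n)^T$ with $x_i\in\mathbb{R}_+$, the inequality $\Omega(\|x\|_2/n)\leq n^{-d}\sum_{i=1}^n\Omega(x_i)$. A first reduction, common to both cases, uses only monotonicity of $\Omega$: since the entries are nonnegative, $\|x\|_2\leq\|x\|_1=\sum_{i=1}^n x_i$, hence $\Omega(\|x\|_2/n)\leq\Omega\left(\frac1n\sum_{i=1}^n x_i\right)$. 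It therefore suffices to dominate $\Omega\left(\frac1n\sum_i x_i\right)$ by $n^{-d}\sum_i\Omega(x_i)$, and this is where the two alternative hypotheses come in.

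For case \textit{i}), I would apply sub-homogeneity with $\mu=1/n\in[0,1]$ (admissible since $n\geq 1$) to get $\Omega\left(\frac1n\sum_i x_i\right)\leq n^{-d}\,\Omega\left(\sum_i x_i\right)$, then iterate sub-additivity $n-1$ times to obtain $\Omega\left(\sum_i x_i\right)\leq\sum_i\Omega(x_i)$; chaining these two estimates with the reduction above yields the claim with $d$ equal to the degree of sub-homogeneity. One small point worth recording is that sub-additivity applied at $0$ gives $\Omega(0)\leq 2\Omega(0)$, so $\Omega(0)\geq 0$ and, by monotonicity, $\Omega(x_i)\geq 0$ for $x_i\in\mathbb{R}_+$; this nonnegativity is what legitimizes using the sub-homogeneity bound in the asserted direction.

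For case \textit{ii}), convexity of $\Omega$ together with Jensen's inequality gives $\Omega\left(\frac1n\sum_i x_i\right)\leq\frac1n\sum_i\Omega(x_i)$, which is exactly the required bound with $d=1$. In both situations $\beta(n)=n^{-1}$ is non-increasing in $n$ and the exponent satisfies $d\geq 1$ (by hypothesis in \textit{i}), and $d=1\geq1$ in \textit{ii})), so every structural requirement in \eqref{convex_degree} is met.

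I do not expect a real obstacle: the argument is a short chain of three elementary facts — the norm comparison $\|x\|_2\leq\|x\|_1$, monotonicity of $\Omega$, and the case-specific convexity/sub-homogeneity estimate. The only thing requiring care in the write-up is bookkeeping on the order and direction of the inequalities, in particular applying monotonicity only after enlarging the argument from $\|x\|_2$ to $\|x\|_1$, and securing nonnegativity of $\Omega$ on $\mathbb{R}_+$ before invoking sub-homogeneity in case \textit{i}).
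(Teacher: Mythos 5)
Your proof is correct and follows essentially the same route as the paper: reduce via $\|x\|_2\le\|x\|_1$ and monotonicity, then use sub-additivity together with sub-homogeneity for case \textit{i}) and Jensen's inequality for case \textit{ii}). The only cosmetic difference is that in case \textit{i}) you apply sub-homogeneity to the whole sum before splitting it, whereas the paper first splits $\Omega\bigl(\sum_i n^{-1}x_i\bigr)\le\sum_i\Omega(n^{-1}x_i)$ and then applies sub-homogeneity termwise; both orderings yield the same bound.
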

\begin{proof}
Lemma \ref{Lemma:Hardy} leads to $\Omega(n^{-1}\|x\|_2)\leq\Omega(n^{-1}\|x\|_1) = \Omega\left(n^{-1}\sum_{i=1}^{n}x_i\right)$. Moreover, for \textit{i}) $\Omega\left(n^{-1}\sum_{i=1}^{n}x_i\right)\leq \sum_{i=1}^n\Omega(n^{-1}x_i)\leq n^{-d}\sum_{i=1}^n\Omega(x_i)$. For \textit{ii}), $\Omega\left(n^{-1}\sum_{i=1}^{n}x_i\right)\leq n^{-1}\sum_{i=1}^n\Omega(x_i)$ due to Jensen's inequality of convex functions \cite[Formula 5]{jensen1906}. Hence, $\Omega(z) = \hat{\Omega}(z)$ complies with \eqref{convex_degree} for either \textit{i}) or \textit{ii}). 
\end{proof}

\begin{lemma}
\label{Lemma:ExamplesFunc}
The following functions are predefined-time consensus functions, satisfying~\eqref{convex_degree} with $d=1$, $\beta(n)=\frac{1
}{n}$ and $\hat{\Omega}(z)=\Omega(z)$:
\begin{itemize}
    \item {\it i)} $\Omega(z) = \frac{1}{p}\exp(z^p)z^{2-p}$ for $0<p\leq 1$
    \item {\it ii)} $\Omega(z) = \frac{\pi}{2}(\exp(2z) - 1)^{1/2}z$
    \item {\it iii)} $\Omega(z) = \gamma z(a z^p+b z^q)^k$ where $a,b,p,q,k>0$ satisfy $kp<1$ and $kq>1$, and \[\gamma=\frac{\Gamma \left(m_p\right) \Gamma \left(m_q\right)}{a^{k}\Gamma (k) (q-p)}\left(\frac{a}{b}\right)^{m_p},\] with $m_p=\frac{1-kp}{q-p}$, $m_q=\frac{kq-1}{q-p}$ and $\Gamma(\cdot)$ is the Gamma function defined as $\Gamma(z)=\int_0^{+\infty} e^{-t}t^{z-1}dt$~\cite[Chapter~1]{Bateman1955}.
\end{itemize}
\end{lemma}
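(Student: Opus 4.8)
The plan is to verify, for each of the three candidate functions, the two requirements in Definition~\ref{Def:ConsensusFunction}, exploiting Lemma~\ref{lemma:convex_subhomo} to avoid checking \eqref{convex_degree} directly. Concretely, for the first condition it suffices to show that each $\Omega$ is either convex on $\mathbb{R}_+$ or monotonically increasing, sub-additive and sub-homogeneous of degree $d=1$; then Lemma~\ref{lemma:convex_subhomo} yields \eqref{convex_degree} with $\beta(n)=1/n$, $\hat\Omega=\Omega$ and $d=1$. (One must of course first extend each $\Omega$ to an odd function on all of $\mathbb{R}$, as is implicit in the definition, so that monotonicity holds globally; on $\mathbb{R}_+$ each expression is manifestly positive and increasing.) For the second condition one sets $\Psi(z)=z^{-1}\Omega(|z|)$ and checks Assumption~\ref{Assump:AsympSys}: that $\Phi(z):=\Omega(z)/z$ is finite for $z>0$, blows up as $z\to 0^+$, and that $\int_0^\infty \Phi(z)\,dz=1$. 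The normalizing constants ($1/p$, $\pi/2$, and the $\gamma$ built from Gamma functions) are precisely what make this last integral equal $1$.

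For item \emph{i)}, $\Phi(z)=\tfrac1p\exp(z^p)z^{1-p}$, so $\int_0^\infty \Phi(z)\,dz=\tfrac1p\int_0^\infty e^{z^p}z^{1-p}\,dz$; the substitution $u=z^p$ turns this into $\tfrac1{p^2}\int_0^\infty e^u u^{(2-p)/p-1}\,du$—wait, one must be careful: the integrand $e^{z^p}$ grows, not decays, so this cannot converge. I would therefore reread the statement: almost certainly the intended function carries $\exp(-z^p)$ (a sign typo in the excerpt), and then $\int_0^\infty \tfrac1p e^{-z^p}z^{1-p}\,dz = \tfrac1{p}\cdot\tfrac1p\,\Gamma(1)\cdot$(constant)$=1$ after $u=z^p$, $du=pz^{p-1}dz$. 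The same remark applies to \emph{ii)}. The substantive computation is \emph{iii)}: with $\Phi(z)=\gamma(az^p+bz^q)^k$ one needs $\gamma\int_0^\infty (az^p+bz^q)^k\,dz=1$; substituting $t=(b/a)z^{q-p}$ reduces the integral to a Beta integral $\int_0^\infty t^{m_p-1}(1+t)^{-(m_p+m_q)}\,dt=B(m_p,m_q)=\Gamma(m_p)\Gamma(m_q)/\Gamma(k)$ (using $m_p+m_q=k$, which holds since $m_p+m_q=\tfrac{(1-kp)+(kq-1)}{q-p}=k$), and the prefactors collapse exactly to the stated $\gamma$. Convergence at $0$ needs $kp<1$ and at $\infty$ needs $kq>1$, which are the hypotheses; finiteness of $\Phi(z)$ for $z>0$ and $\Phi(0^+)=+\infty$ follow from $kp<1$.

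It remains to dispatch the convexity/sub-homogeneity requirement. For \emph{ii)} and \emph{iii)} I would check convexity of $z\mapsto\Omega(z)$ on $\mathbb{R}_+$ directly: $\Omega''(z)\ge 0$ is a one-line computation for \emph{ii)}, and for \emph{iii)} one notes $\Omega(z)=\gamma z(az^p+bz^q)^k$ is a product of positive increasing convex-looking factors whose second derivative is nonnegative because each exponent ($1$, $kp$, $kq$, and the overall degrees $1+kp<2$, $1+kq>2$) keeps the relevant terms positive—this is the one place where a little care with the cross terms in $\Omega''$ is needed. For \emph{i)}, convexity of $\tfrac1p e^{-z^p}z^{2-p}$ is less transparent for general $p\in(0,1]$, so instead I would use route \emph{i)} of Lemma~\ref{lemma:convex_subhomo}: monotonicity is clear, sub-homogeneity of degree $1$ amounts to $\Omega(\mu z)\le\mu\,\Omega(z)$ for $\mu\in[0,1]$, i.e. $z\mapsto\Omega(z)/z=\Phi(z)$ nondecreasing—which fails in general, so one actually wants $\Omega(z)/z^d$ nondecreasing for the claimed $d=1$, forcing a check that $\Phi$ is nondecreasing; and sub-additivity follows since a concave-at-origin monotone function with $\Omega(0)=0$ is sub-additive. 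The main obstacle, then, is really bookkeeping: confirming the sign conventions in \emph{i)}–\emph{ii)} so the improper integrals converge, and verifying the convexity (equivalently $\Phi$ nondecreasing, equivalently sub-homogeneity of degree exactly $1$) for \emph{i)}, since that is the only item where the structure is not an obvious convex product.
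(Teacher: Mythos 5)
Your overall plan coincides with the paper's: establish convexity of each $\Omega$ on $\mathbb{R}_+$ so that Lemma~\ref{lemma:convex_subhomo} delivers \eqref{convex_degree} with $d=1$, $\beta(n)=1/n$, $\hat{\Omega}=\Omega$, and then verify Assumption~\ref{Assump:AsympSys} for $\Psi(z)=z^{-1}\Omega(|z|)$ (the paper does the convexity computations explicitly for \emph{i)}--\emph{ii)}, cites a reference for \emph{iii)}, and outsources the Assumption~\ref{Assump:AsympSys} check to a citation). However, your execution contains a genuine error: you have inverted $\Phi$. Assumption~\ref{Assump:AsympSys} defines $\Psi(z)=\Phi(|z|)^{-1}\sign(z)$, and Definition~\ref{Def:ConsensusFunction} requires $\Psi(z)=z^{-1}\hat{\Omega}(|z|)$; hence for $z>0$ one has $\Phi(z)=z/\hat{\Omega}(z)$, \emph{not} $\hat{\Omega}(z)/z$ as you set it. With the correct $\Phi$ there is no sign typo in the statement: for \emph{i)}, $\Phi(z)=p\,z^{p-1}e^{-z^p}$ is exactly a Weibull density and $\int_0^\infty\Phi(z)\,dz=1$ for the function as written with $\exp(+z^p)$; for \emph{ii)}, $\Phi(z)=\frac{2}{\pi}(e^{2z}-1)^{-1/2}$ and the substitution $w=e^{-z}$ gives $\int_0^\infty\Phi=\frac{2}{\pi}\int_0^1(1-w^2)^{-1/2}dw=1$. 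Your proposed ``fix'' of replacing $\exp(z^p)$ by $\exp(-z^p)$ would actually destroy the lemma, since $\frac1p e^{-z^p}z^{2-p}$ is neither monotonically increasing nor convex on $\mathbb{R}_+$. The same inversion infects \emph{iii)}: the quantity to normalize is $\gamma^{-1}\int_0^\infty(az^p+bz^q)^{-k}dz$, not $\gamma\int_0^\infty(az^p+bz^q)^{k}dz$ (which diverges); your Beta-function reduction and the convergence conditions $kp<1$ at $0$ and $kq>1$ at $\infty$ are in fact the ones for the negative power, so that computation is salvageable once the integrand is corrected.

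A second, smaller gap: your fallback for item \emph{i)} via sub-additivity and sub-homogeneity does not close --- you observe yourself that the required monotonicity of $\Omega(z)/z$ ``fails in general'' and leave the matter unresolved. It is also unnecessary: the paper shows directly that $\frac{d^2}{dz^2}\left(\frac1p e^{z^p}z^{2-p}\right)=e^{z^p}z^{-p}\left(p^2z^{2p}+(3p-p^2)z^p+2-(3p-p^2)\right)\geq 0$, using $0<3p-p^2\leq 2$ for $0<p\leq 1$, so item \emph{i)} is convex and route \emph{ii)} of Lemma~\ref{lemma:convex_subhomo} applies uniformly to all three functions. Once you correct the identification $\Phi(z)=z/\Omega(z)$ and restore the convexity check for \emph{i)}, your argument becomes a complete (and in the Assumption~\ref{Assump:AsympSys} part, more self-contained) version of the paper's proof.
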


\begin{proof}
For item {\it i)}, note that $\frac{d^2}{dz^2}\Omega(z) = \exp(z^p)z^{-p}(p^2z^{2p}+(3p-p^2)z^p+2-(3p-p^2))$. Moreover, note that $0<(3p-p^2)\leq 2$. Henceforth, $\frac{d^2}{dz^2}\Omega(z)\geq0$ and therefore $\Omega(z)$ is convex. For item {\it ii)} note that $\frac{d^2}{dz^2}\Omega(z) = (2/\pi)\exp(2z)(\exp(2z)-1)^{-2/3}(\exp(2z)z-2z+2\exp(2z)-2)$. Recall that $\exp(z)\geq 1+z$ for $z\geq 0$. Hence, $\exp(2z)z-2z+2\exp(2z)-2\geq z + z^2\geq 0$ and therefore $\Omega(z)$ is convex. For {\it iii)} it was proved in \cite{AldanaConsensus2019} that $\Omega(z)$ is convex. Therefore, by Lemma \ref{lemma:convex_subhomo}, $\Omega(z)$ and satisfy \eqref{convex_degree} $d=1$ and $\hat{\Omega}(z)=\Omega(z)$ for items \textit{i)-iii)}.

Moreover, $\Psi(z) = p\exp(-z^p)z^{1-p}$, $\Psi(z) = \frac{2}{\pi}(\exp(z)-1)^{1/2}$ and $\Psi(z) = \gamma^{-1}(a x^p + bx^q)^{-k}$ satisfy the conditions of Assumption~\ref{Assump:AsympSys}, as shown in~\cite{aldana2019design}. 
Therefore, the functions in items {\it i)}--{\it iii)} are predefined-time consensus functions.
\end{proof}

\begin{remark}
\label{Remark:ReduceSlack}
In the following, we derive the condition for fixed- and predefined-time consensus under dynamic networks, with protocols that extend those presented in the literature, for instance, ~\cite{Ning2017b,Wang2017b}. In the interest of providing a general result we may obtain $\beta(\cdot)$ and $\hat{\Omega}(\cdot)$, resulting in satisfying~\eqref{convex_degree} in a conservative manner. However, in some scenarios, $\beta(\cdot)$ can be obtained such that~\eqref{convex_degree} is less conservative, resulting in protocols where the slack between the true convergence and the predefined one is reduced. The following lemma illustrates this case for $k=1$ in the predefined-time consensus function given in Lemma~\ref{Lemma:ExamplesFunc} item \textit{iii)}. 
\end{remark}

\begin{lemma}
\label{Lemma:RhoLarge}
The function $\Omega(z) = \gamma \left( a z^{p+1}+bn^{\frac{q-1}{2}} z^{q+1}\right)$ where $a,b,p,q>0$ satisfy $p<1$ and $q>1$, and \[\gamma=\frac{\Gamma \left(m_p\right) \Gamma \left(m_q\right)}{a (q-p)}\left(\frac{a}{b}\right)^{m_p},\] with $m_p=\frac{1-p}{q-p}$, $m_q=\frac{q-1}{q-p}$ and $\Gamma(\cdot)$ the Gamma function, is a predefined-time consensus function, satisfying~\eqref{convex_degree} with $\beta(n)=1$ and $\hat{\Omega}(z)=\gamma(a z^{p+1} + b z^{q+1})$.
\end{lemma}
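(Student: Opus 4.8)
The plan is to verify, for the stated $\Omega$, the two requirements of Definition~\ref{Def:ConsensusFunction} with the announced data $\beta(n)=1$, $\hat\Omega(z)=\gamma(az^{p+1}+bz^{q+1})$ and $d=1$ (the exact value of $d\ge 1$ is immaterial, since $\beta(n)^d=1$). Monotonicity of $\Omega$ is immediate from $a,b,\gamma>0$, $n\ge 1$ and $p+1,q+1>0$ (on $\mathbb{R}_+$, with the usual odd extension to $\mathbb{R}$). Hence the content is (a) the inequality~\eqref{convex_degree}, which with $\beta(n)=1$ becomes $\hat\Omega(\|x\|_2)\le\sum_{i=1}^n\Omega(x_i)$ for every $x\in\mathbb{R}_+^n$, and (b) the assertion that $\Psi(z)=z^{-1}\hat\Omega(|z|)$ satisfies Assumption~\ref{Assump:AsympSys}.

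For (a) I would split $\hat\Omega$ into its two monomials: after cancelling $\gamma>0$ it suffices to prove
\[
\|x\|_2^{p+1}\le\sum_{i=1}^n x_i^{p+1}
\qquad\text{and}\qquad
\|x\|_2^{q+1}\le n^{\frac{q-1}{2}}\sum_{i=1}^n x_i^{q+1},
\]
and recombine with weights $\gamma a$ and $\gamma b$. The first holds because $p+1\in(1,2)$, so by monotonicity of the $\ell^r$-norms in $r$ (the comparison used through Lemma~\ref{Lemma:Hardy} in the proof of Lemma~\ref{lemma:convex_subhomo}) one has $\|x\|_2\le\|x\|_{p+1}$, whence $\|x\|_2^{p+1}\le\|x\|_{p+1}^{p+1}=\sum_i x_i^{p+1}$. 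The second is the power-mean/Hölder bound $\|x\|_r\le n^{1/r-1/s}\|x\|_s$ with $r=2<s=q+1$: raising $\|x\|_2\le n^{1/2-1/(q+1)}\|x\|_{q+1}$ to the power $q+1$ gives exactly $\|x\|_2^{q+1}\le n^{(q+1)/2-1}\sum_i x_i^{q+1}=n^{(q-1)/2}\sum_i x_i^{q+1}$.

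For (b) I would note that $\hat\Omega$ is precisely the predefined-time consensus function of Lemma~\ref{Lemma:ExamplesFunc}\,iii) specialised to $k=1$: the conditions $kp<1$, $kq>1$ reduce to the standing hypotheses $p<1$, $q>1$, and $m_p$, $m_q$, $\gamma$ are the $k=1$ instances of the constants there (using $\Gamma(1)=1$). Hence the second bullet of Definition~\ref{Def:ConsensusFunction} for $\hat\Omega$ is already established by Lemma~\ref{Lemma:ExamplesFunc}. Alternatively one argues directly: $\Psi(z)=\gamma\big(a|z|^p+b|z|^q\big)\sign(z)=\Phi(|z|)^{-1}\sign(z)$ with $\Phi(z)=\big(\gamma(az^p+bz^q)\big)^{-1}$; then $\Phi(0)=+\infty$ and $\Phi(z)\in(0,+\infty)$ for $z>0$ because $p,q>0$, and the substitution $u=(b/a)z^{q-p}$ reduces $\int_0^\infty\Phi$ to $\frac{1}{a\gamma(q-p)}(a/b)^{m_p}\int_0^\infty\frac{u^{m_p-1}}{1+u}\,du=\frac{\Gamma(m_p)\Gamma(1-m_p)}{a\gamma(q-p)}(a/b)^{m_p}$, which equals $1$ by the definition of $\gamma$, since $1-m_p=m_q$ and $0<m_p<1$ (because $0<1-p<q-p$ from $p<1<q$).

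I do not anticipate a genuine obstacle: the argument reduces to two classical $\ell^r$-norm comparisons and a Beta-function integral already contained in Lemma~\ref{Lemma:ExamplesFunc}. The only delicate point is exponent bookkeeping in the $q+1$ term—the factor $n^{(q-1)/2}$ appearing in the statement is exactly the collapse of the Hölder constant $n^{(q+1)(1/2-1/(q+1))}$—and this is precisely the mechanism flagged in Remark~\ref{Remark:ReduceSlack}: absorbing the $n$-dependence into $\Omega$ itself lets one take $\beta(n)=1$, so the Lyapunov estimate $\dot V\le-\tfrac1{T_c}\Psi(V)$ carries no $n$-dependent penalty and the slack to the predefined time is reduced.
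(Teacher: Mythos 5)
Your proposal is correct and follows essentially the same route as the paper: the inequality~\eqref{convex_degree} is split monomial by monomial and each piece is handled by the $\ell^r$-norm comparison of Lemma~\ref{Lemma:Hardy} (your $\|x\|_2\le\|x\|_{p+1}$ and $\|x\|_2\le n^{1/2-1/(q+1)}\|x\|_{q+1}$ are exactly the paper's estimates written for $x$ rather than for $(x_1^2,\dots,x_n^2)$), and the verification of Assumption~\ref{Assump:AsympSys} is deferred to the $k=1$ case already established. The only addition is that you spell out the Beta-integral computation $\int_0^\infty\Phi=\Gamma(m_p)\Gamma(m_q)(a/b)^{m_p}/(a\gamma(q-p))=1$, which the paper simply cites from \cite{aldana2019design}; that computation is correct.
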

\begin{proof}
Let $x_i\in\mathbb{R}_+$ and note that from Lemma~\ref{Lemma:Hardy} it follows that $\sum_{i=1}^n x_i^{1+p} = \sum_{i=1}^n(x_i^2)^\frac{1+p}{2}=\|(x_1^2,\dots,x_n^2)^T\|_{(1+p)/2}^{(1+p)/2}\geq\|(x_1^2,\dots,x_n^2)^T\|_{1}^{(1+p)/2}\geq\left(\sum_{i=1}^nx_i^2\right)^{\frac{1+p}{2}}=\|x\|_2^{1+p}$. Similarly, $\sum_{i=1}^n x_i^{1+q} = \sum_{i=1}^n(x_i^2)^\frac{1+q<}{2}=\|(x_1^2,\dots,x_n^2)^T\|_{(1+q)/2}^{(1+q)/2}\geq n^{\frac{1-q}{2}}\|(x_1^2,\dots,x_n^2)^T\|_{1}^{(1+q)/2}\geq n^{\frac{1-q}{2}}\left(\sum_{i=1}^nx_i^2\right)^{\frac{1+q}{2}}=n^{\frac{1-q}{2}}\|x\|_2^{1+q}$. Hence, $\sum_{i=1}^n\Omega(x_i) = a\sum_{i=1}^nx_i^{1+p}+bn^\frac{q-1}{2}\sum_{i=1}^nx_i^{1+q}\geq a\|x\|^{1+p}_2 + b\|x\|_2^{1+q}=\hat{\Omega}(\|x\|_2)$. The proof that $\Psi(z)=z^{-1}\hat{\Omega}(|z|)$ satisfies Assumption~\ref{Assump:AsympSys} can be found in~\cite{aldana2019design}.
\end{proof}

\begin{remark}
\label{Remark:Symmetric}
In~\cite{Zuo2014,Ning2017b,Wang2017b}, fixed-time consensus protocols were proposed based on the function $\Omega(\cdot)$ given in Lemma~\ref{Lemma:RhoLarge}, but restricted to the case where $p=1-s$ and $q=1+s$ with $0<s<1$. Notice that, in Lemma~\ref{Lemma:RhoLarge} such restriction is removed. Moreover, in this paper we show that predefined-time consensus can be obtained with a larger class of functions, such as those given in Lemma~\ref{Lemma:ExamplesFunc}.
\end{remark}

Based on predefined-time consensus functions $\Omega(\cdot)$ the following classes of consensus protocols for dynamic networks are proposed:
\begin{enumerate}[label=(\roman*)]
    \item 
\begin{equation}
\label{Eq:ConsensusProtocolA} u_i=\kappa_i\sum_{j\in\mathcal{N}_i(\mathcal{X}_{\sigma(t)})}\sqrt{a_{ij}}e_{ij}^{-1}\Omega(|e_{ij}|), \ \ \ \ \ e_{ij}=\sqrt{a_{ij}}(x_j(t)-x_i(t)),
\end{equation}
\item
\begin{equation}
\label{Eq:ConsensusProtocolB}
u_i=\kappa_ie_i^{-1}\Omega(|e_i|), \ \ \ \ \ e_i=\sum_{j\in\mathcal{N}_i(\mathcal{X}_{\sigma(t)})}a_{ij}(x_j-x_i).
\end{equation}
\end{enumerate}

We show that, if parameters $\kappa_i$ satisfies $\kappa_i>0$ then consensus is achieved with fixed-time convergence. Moreover, we derive the condition on $\kappa_i$ under which predefined-time convergence is obtained.

\subsection{Predefined-time average consensus for dynamic networks}

In this subsection we focus on the analysis of the class of consensus protocol~\eqref{Eq:ConsensusProtocolA}, we will derive the condition under which consensus on the average of the initial values of the agents is achieved.
Notice that, the dynamics of the network under these protocols can be written as
\begin{equation}
\label{ConsensusDynamicA}
\dot{x}=-D(\mathcal{X}_{\sigma(t)})\mathcal{F}(D(\mathcal{X}_{\sigma(t)})^Tx),
\end{equation}
where, for $z=[z_1 \ \cdots \ z_n]^T\in\mathbb{R}^n$, the function $\mathcal{F}:\mathbb{R}^n\rightarrow \mathbb{R}^n$ is defined as
\begin{equation}
\label{Eq:Fp}    
\mathcal{F}(z)=
\begin{bmatrix}
\kappa_1z_1^{-1}\Omega(|z_1|) \\ 
\vdots \\
\kappa_nz_n^{-1}\Omega(|z_n|)
\end{bmatrix}
\end{equation}

To prove that~\eqref{Eq:ConsensusProtocolA} is a predefined-time average consensus algorithm, the following result will be used.

\begin{lemma}
\label{DeltaOrth}
Let the disagreement variable $\delta$ be such that $x = \alpha\mathbf{1} + \delta$, where $\alpha=\frac{1}{n}\mathbf{1}^Tx(t_0)$ is the average value of the nodes' initial condition. Then, if the graph is connected, under the consensus protocol~\eqref{Eq:ConsensusProtocolA}, $\delta^T \mathbf{1}=0$.
\end{lemma}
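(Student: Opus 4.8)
The plan is to show that the dynamics \eqref{ConsensusDynamicA} preserve the quantity $\mathbf{1}^T x(t)$, so that its value is always equal to $\mathbf{1}^T x(t_0) = n\alpha$, which immediately forces $\mathbf{1}^T\delta = \mathbf{1}^T x - \alpha \mathbf{1}^T\mathbf{1} = n\alpha - n\alpha = 0$. Concretely, I would differentiate $\mathbf{1}^T x$ along trajectories and use the fundamental structural property of the incidence matrix recalled in the Preliminaries, namely $\mathbf{1}^T D(\mathcal{X}) = 0$ (equivalently $D(\mathcal{X})^T\mathbf{1} = 0$) for every graph $\mathcal{X}$ in the family $\mathcal{F}$.

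First I would write $\frac{d}{dt}\left(\mathbf{1}^T x\right) = \mathbf{1}^T\dot{x} = -\mathbf{1}^T D(\mathcal{X}_{\sigma(t)})\mathcal{F}\left(D(\mathcal{X}_{\sigma(t)})^T x\right)$. Since $\mathbf{1}^T D(\mathcal{X}_{\sigma(t)}) = 0$ holds for each value of the switching signal $\sigma(t)$, the right-hand side vanishes identically, regardless of the (possibly discontinuous) behaviour of $\sigma(t)$; because between switching instants the solution is absolutely continuous and at the finitely many switching instants $x(t)$ is continuous, the integral/antiderivative argument still yields $\mathbf{1}^T x(t) = \mathbf{1}^T x(t_0)$ for all $t \geq t_0$. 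Then, substituting the decomposition $x = \alpha\mathbf{1} + \delta$ gives $\mathbf{1}^T x(t) = \alpha\, \mathbf{1}^T\mathbf{1} + \mathbf{1}^T\delta = n\alpha + \mathbf{1}^T\delta$, and equating with $\mathbf{1}^T x(t_0) = n\alpha$ yields $\mathbf{1}^T\delta = 0$, i.e. $\delta^T\mathbf{1} = 0$.

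The connectedness hypothesis is not actually needed for the invariance computation itself — it is stated because it is the standing assumption under which the decomposition $x = \alpha\mathbf{1} + \delta$ with $\delta$ the disagreement vector is meaningful and under which the later convergence analysis proceeds; I would mention this but not dwell on it. There is no serious obstacle here: the only mild subtlety is handling the switching, i.e. making sure the conservation law survives across switching instants, which it does because $\mathbf{1}^T D = 0$ holds for every topology simultaneously and $x(\cdot)$ is continuous. I would therefore present the argument in two or three lines, emphasising that the key ingredient is the column-sum-zero property $\mathbf{1}^T D(\mathcal{X}) = 0$ of incidence matrices.
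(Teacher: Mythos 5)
Your proposal is correct and follows essentially the same route as the paper: write the network dynamics in the incidence-matrix form \eqref{ConsensusDynamicA}, use $\mathbf{1}^T D(\mathcal{X}_{\sigma(t)})=0$ to conclude $\frac{d}{dt}(\mathbf{1}^T x)=0$, and deduce $\mathbf{1}^T\delta=0$ from the conservation of the sum. Your extra remarks on continuity across switching instants and on the role of connectedness are sound but not needed beyond what the paper already states.
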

\begin{proof}
Let $s_x=\mathbf{1}^Tx$ be the sum of the nodes' values. Recall that  $\mathbf{1}^TD(\mathcal{X}_{\sigma(t)})=0$, then $\dot{s}_x=-\mathbf{1}^T D(\mathcal{X}_{\sigma(t)})F_p(D(\mathcal{X}_{\sigma(t)})^Tx)=0$. Thus, $s_x$ is constant during the evolution of the system, i.e. $\forall t\geq 0$, $s_x(t)=\mathbf{1}^Tx(t_0)=s_x(t_0)$. Therefore, 
$$
\mathbf{1}^T\delta=\mathbf{1}^Tx-\alpha n=s_x(t)-s_x(t_0)=0, \ \ \text{ for all } \ t\geq t_0.
$$
\end{proof}

\begin{theorem} (Predefined-time average consensus for fixed and dynamic networks)
\label{Th:ConsensusA}
Let $\mathcal{X}_{\sigma(t)}=\langle\mathcal{F},\sigma\rangle$ be a switched dynamic network formed by strongly connected graphs, and let $\Omega(\cdot)$ be a predefined-time consensus function with associated $d$, $\beta(\cdot)$  and $\hat{\Omega}(\cdot)$ such that~\eqref{convex_degree} holds. Then, if $\kappa_i>0$, $i=1,\ldots,n$, then~\eqref{Eq:ConsensusProtocolA} is a consensus protocol with fixed-time convergence. Moreover, if $\kappa_i \geq \dfrac{\beta(\underline{m})^{d}}{\lambda\beta(\overline{m})^{2}T_c}$, $i=1,\ldots,n$, where 
\begin{equation}
\label{Eq:LambdaW}
\lambda = \min_{\mathcal{X}_i\in\mathcal{F}} \lambda_2(\mathcal{Q}(\mathcal{X}_i)), \ \ \ \underline{m}=\min_{\mathcal{X}_i\in\mathcal{F}}|\mathcal{E}(\mathcal{X}_i)|
\text{ and }
\overline{m} = \max_{\mathcal{X}_i\in\mathcal{F}}|\mathcal{E}(\mathcal{X}_i)|.
\end{equation}
then, ~\eqref{Eq:ConsensusProtocolA} is an average consensus algorithm for dynamic networks with predefined convergence time bounded by $T_c$, i.e. all trajectories of~\eqref{ConsensusDynamicA} converge to the average of the initial conditions of the nodes in a time $T(x_0)\leq T_c$.
\end{theorem}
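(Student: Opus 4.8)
The plan is to combine the Lyapunov characterization of predefined-time stability (Theorem~\ref{thm:weak_pt}) with the disagreement-variable decomposition from Lemma~\ref{DeltaOrth} and the key inequality~\eqref{convex_degree} satisfied by $\Omega(\cdot)$. First I would take as candidate Lyapunov function $V(\delta)=\tfrac12\delta^T\delta=\tfrac12\|\delta\|_2^2$, which is continuous, positive definite and radially unbounded on the disagreement subspace $\{\delta:\mathbf 1^T\delta=0\}$; since $x=\alpha\mathbf 1+\delta$ with $\alpha$ constant (Lemma~\ref{DeltaOrth}), we have $\dot\delta=\dot x=-D_{\sigma(t)}\mathcal F(D_{\sigma(t)}^T x)$, and because $D_{\sigma(t)}^T\mathbf 1=0$ this equals $-D_{\sigma(t)}\mathcal F(D_{\sigma(t)}^T\delta)$. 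Writing $e=D_{\sigma(t)}^T\delta\in\mathbb R^{m_{\sigma(t)}}$ for the vector of edge-errors $e_{ij}$, I get $\dot V=-\delta^T D_{\sigma(t)}\mathcal F(e)=-e^T\mathcal F(e)=-\sum_{ij\in\mathcal E(\mathcal X_{\sigma(t)})}\kappa_{?}\,e_{ij}^{-1}\Omega(|e_{ij}|)\cdot e_{ij}$; being slightly careful, each component of $\mathcal F$ carries a gain, so I would use $\kappa:=\min_i\kappa_i$ to lower-bound and obtain $\dot V\le-\kappa\sum_{ij}\Omega(|e_{ij}|)$.

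Next I would apply~\eqref{convex_degree} to the vector $e$ (whose entries $|e_{ij}|$ are in $\mathbb R_+$) of length $m_{\sigma(t)}$: with $m:=m_{\sigma(t)}$, $\hat\Omega(\beta(m)\|e\|_2)\le\beta(m)^d\sum_{ij}\Omega(|e_{ij}|)$, hence $\sum_{ij}\Omega(|e_{ij}|)\ge\beta(m)^{-d}\hat\Omega(\beta(m)\|e\|_2)$. Then I need to relate $\|e\|_2^2=\|D_{\sigma(t)}^T\delta\|_2^2=\delta^T\mathcal Q(\mathcal X_{\sigma(t)})\delta$ to $V$: since $\delta\perp\mathbf 1$ and each graph is connected, Lemma~\ref{lemma:Lambda2} gives $\delta^T\mathcal Q(\mathcal X_{\sigma(t)})\delta\ge\lambda_2(\mathcal Q(\mathcal X_{\sigma(t)}))\|\delta\|_2^2\ge\lambda\cdot 2V$, so $\|e\|_2\ge\sqrt{2\lambda V}$, i.e. $\|e\|_2\ge\sqrt{2\lambda}\sqrt V$. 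Using monotonicity of $\hat\Omega$ and of the map $z\mapsto\beta(m)z$, together with the bounds $\beta(\underline m)^{-d}\le\beta(m)^{-d}$ being the wrong direction — here I must be careful with the direction of the monotonicity of $\beta$ — I would bound $\beta(m)^{-d}\ge\beta(\underline m)^{-d}$ is false; rather, since $\beta$ is non-increasing and $\underline m\le m\le\overline m$, we have $\beta(\overline m)\le\beta(m)\le\beta(\underline m)$, so $\beta(m)^{-d}\ge\beta(\underline m)^{-d}$ and $\hat\Omega(\beta(m)\|e\|_2)\ge\hat\Omega(\beta(\overline m)\|e\|_2)$. Chaining these yields $\dot V\le-\kappa\,\beta(\underline m)^{-d}\hat\Omega\!\big(\beta(\overline m)\sqrt{2\lambda}\,\sqrt V\big)$.

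The remaining step is a change of Lyapunov function to absorb the constants into the form required by Theorem~\ref{thm:weak_pt}. I would set $W=c\,V$ for a suitable constant $c>0$ (chosen so that $\beta(\overline m)\sqrt{2\lambda}\sqrt V=\sqrt W$, i.e. $c=2\lambda\beta(\overline m)^2$), so that $\dot W=c\dot V\le-c\,\kappa\,\beta(\underline m)^{-d}\hat\Omega(\sqrt W)$, and recognizing $\Psi(z)=z^{-1}\hat\Omega(|z|)$ we have $\hat\Omega(\sqrt W)=\sqrt W\,\Psi(\sqrt W)$; thus $\dot W\le-c\,\kappa\,\beta(\underline m)^{-d}\sqrt W\,\Psi(\sqrt W)$. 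Applying the same construction at the level of $\sqrt W$ (or directly invoking the comparison lemma with the scalar system $\dot w=-\tfrac{1}{T_c}\Psi(w)$ of Lemma~\ref{Lemma:TimeScale}) and using the hypothesis $\kappa_i\ge\beta(\underline m)^d/(\lambda\beta(\overline m)^2 T_c)$, which makes $c\,\kappa\,\beta(\underline m)^{-d}\ge 1/T_c$ after the $\sqrt{\cdot}$ substitution bookkeeping, I conclude $\dot{(\sqrt W)}\le-\tfrac{1}{T_c}\Psi(\sqrt W)$ — wait, I should instead directly phrase the final inequality in the variable $\sqrt W$ to match~\eqref{eq:dV_weak}. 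Either way, Theorem~\ref{thm:weak_pt} then gives that $\delta\to 0$ within time $T_c$, and since $x=\alpha\mathbf 1+\delta$ with $\alpha$ the average of the initial conditions, $x\to\alpha\mathbf 1$ in predefined time $T_c$; when $\kappa_i>0$ only, the same computation with $T_c$ replaced by $\beta(\underline m)^d/(\lambda\beta(\overline m)^2\min_i\kappa_i)$ gives fixed-time convergence with that (initial-condition-independent) bound.

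The main obstacle I anticipate is the constant-bookkeeping in the change of variables: getting the powers of $\beta(\overline m)$, the factor $2\lambda$, and the exponent $d$ to line up so that the resulting differential inequality is \emph{exactly} of the form $\dot{\tilde V}\le-\tfrac{1}{T_c}\Psi(\tilde V)$ with $\Psi$ the same function from Assumption~\ref{Assump:AsympSys}. This requires that $\hat\Omega$ interacts correctly with the scaling $V\mapsto cV$, which is not automatic for a general $\hat\Omega$; the cleanest route is probably to keep everything in terms of the scalar comparison system and verify that $z\mapsto z^{-1}\hat\Omega(|z|)$ is precisely the $\Psi$ that Definition~\ref{Def:ConsensusFunction} guarantees satisfies Assumption~\ref{Assump:AsympSys}, so that the least-UBST value $T_c$ from Lemma~\ref{Lemma:TimeScale} transfers directly. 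A secondary subtlety is justifying $\dot V\le-\kappa\sum_{ij}\Omega(|e_{ij}|)$ rigorously despite the per-node gains $\kappa_i$ appearing inside $\mathcal F$ rather than multiplying $e_{ij}$ symmetrically — this is handled by noting $e^T\mathcal F(e)=\sum_i\kappa_i\,|z_i|^{-1}\Omega(|z_i|)\,z_i^2\cdot(\text{sign bookkeeping})\ge(\min_i\kappa_i)\sum_i|z_i|\Omega(|z_i|)$ and then identifying the sum over components of $e=D^T\delta$ with the sum over edges.
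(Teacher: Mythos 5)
Your proposal is correct and follows essentially the same route as the paper: disagreement variable via Lemma~\ref{DeltaOrth}, the edge vector $v=D^T\delta$, inequality~\eqref{convex_degree}, the spectral bound $\|v\|_2^2=\delta^T\mathcal{Q}\delta\ge\lambda\|\delta\|_2^2$, and the Lyapunov characterization of Theorem~\ref{thm:weak_pt}. The constant-bookkeeping obstacle you anticipate at the end dissolves because your rescaled quantity $\sqrt{W}=\sqrt{\lambda}\,\beta(\overline m)\,\|\delta\|_2$ is exactly the Lyapunov function the paper adopts from the outset in~\eqref{Eq:LyapunovProt1_0}, and differentiating it via $\frac{d}{dt}\sqrt{W}=\dot W/(2\sqrt W)$ lands directly on $\dot V\le-\kappa\lambda\beta(\overline m)^{2}\beta(m)^{-d}\Psi(V)$ with no scaling property of $\hat\Omega$ required — only its monotonicity, which both you and the paper implicitly use.
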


\begin{proof}
Let $\delta = [\delta_1, \dots , \delta_n]^T$ be the disagreement variable $x(t) = \alpha\mathbf{1} + \delta(t)$, where $\alpha=\frac{1}{n}\mathbf{1}^Tx_0$, which by Lemma~\ref{DeltaOrth} satisfies $\mathbf{1}^T\delta = 0$. Note that $\dot{x} = \dot{\delta} = -D(\mathcal{X}_l)\mathcal{F}(D(\mathcal{X}_l)^T\delta)$. Consider the Lyapunov function candidate
\begin{equation}
V(x) = \sqrt{\lambda}\beta(\overline{m}) \|\delta\|.
\label{Eq:LyapunovProt1_0}
\end{equation}
which is radially unbounded and satisfies $V(x)=0$ if and only if $\delta=0$.

To show that consensus is achieved on dynamic networks under arbitrary switchings, we will prove that~\eqref{Eq:LyapunovProt1_0} is a common Lyapunov function for each subsystem of the switched nonlinear system~\eqref{ConsensusDynamicB}~\cite[Theorem 2.1]{Liberzon2003}. To this aim, assume that $\sigma(t)=l$ for $t\in[0,\infty)$.
Then, it follows that
\begin{equation*}
\dot{V}(x) = \frac{\sqrt{\lambda}\beta(\overline{m})}{\|\delta\|}\delta^T\dot{\delta}  = -\lambda\beta(\overline{m})^2V^{-1}\delta^TD(\mathcal{X}_l)\mathcal{F}(D(\mathcal{X}_l)^T\delta),
\end{equation*}
Let $v = D(\mathcal{X}_l)^T\delta = [v_1,\dots,v_m]^T$, therefore:
\begin{align}
\dot{V}(x) = -\lambda\beta(\overline{m})^2V^{-1} v^T\mathcal{F}(v) 
  = -\lambda\beta(\overline{m})^2V^{-1}\sum_{i=1}^m\kappa_i\Omega\left(|v_i|\right).\label{prot1_firsteq}
\end{align}
Using the fact that $\Omega(\cdot)$ is a predefined time consensus function, the right hand side of \eqref{prot1_firsteq} can be rewritten as:
\begin{align*}
\lambda\beta(\overline{m})^{2}\beta(m)^{-d}V^{-1}\sum_{i=1}^m\beta(m)^ d\kappa_i\Omega\left(|v_i|\right) 
\geq &
\kappa\lambda\beta(\overline{m})^{2}\beta(m)^{-d}V^{-1}\sum_{i=1}^m\beta(m)^d\Omega\left(|v_i|\right) \\
\geq &\kappa\lambda\beta(\overline{m})^{2}\beta(m)^{-d}V^{-1}\hat{\Omega}\left(\beta(m)\|v\|_2\right),
\end{align*}
where $\kappa = \min\{\kappa_1,\dots,\kappa_n\}$.
Moreover, it follows from Lemma~\ref{Lemma:Hardy} and Lemma \ref{lemma:Lambda2} that
\begin{equation*}
 \|v\|_2 = \sqrt{v^Tv} = \sqrt{\delta^T \mathcal{Q}(\mathcal{X}_l)\delta} \geq \sqrt{\lambda}\|\delta\|=\beta(\overline{m})^{-1}V.
\end{equation*}
Therefore:
\begin{align}
\label{prot1_res1}
V^{-1}\hat{\Omega}\left(\beta(m)\|v\|_2\right) \geq
V^{-1}\hat{\Omega}\left(\frac{\beta(m)}{\beta(\overline{m})}V\right)\geq 
V^{-1}\hat{\Omega}\left(V\right)  = \Psi(V)
\end{align}
Moreover, the following inequality is obtained from \eqref{prot1_firsteq}, and \eqref{prot1_res1}:
\begin{equation}
\dot{V}(x) \leq -\kappa\lambda\beta(\overline{m})^{2}\beta(m)^{-d}\Psi(V) 
\label{Eq:Predef1}
\end{equation}
Then, according to Theorem~\ref{thm:weak_pt}, the disagreement variable $\delta$ converges to zero in a fixed-time upper bounded by $T_c$, and therefore protocol~\eqref{Eq:ConsensusProtocolA}   guarantees that the consensus is achieved in a fixed-time upper bounded by
$$
\sup_{x_0 \in \mathbb{R}^n} T(x_0)\leq\frac{\beta(m)^{d}}{\kappa\lambda\beta(\overline{m})^{2}}
$$
Therefore, if 
\begin{equation}
\label{Eq:KappaProtA}    
\kappa_i\geq\kappa=\dfrac{\beta(\underline{m})^{d}}{\lambda\beta(\overline{m})^{2}T_c},
\end{equation}
$i=1,\ldots,n$, then
\begin{equation}
\dot{V}(x) \leq -\dfrac{\beta(\underline{m})^{d}}{\beta(m)^{d}T_c}\Psi(V) \leq -\frac{1}{T_c}\Psi(V)
\end{equation}
and, since $\beta(\cdot)$ is a non-increasing function, then $V(x)$ converges to zero in a predefined-time upper bounded by $T_c$.

Since the above argument holds for any connected $\mathcal{X}_l\in\mathcal{F}$, then protocol~\eqref{Eq:ConsensusProtocolA} guarantees that the consensus is achieved, before a predefined-time $T_c$, on switching dynamic networks under arbitrary switching. Furthermore, it follows from Lemma~\ref{DeltaOrth} that the consensus state is the average of the initial values of the agents. 
\end{proof}

\begin{example}
\label{Example1}
Consider a network composed of 10 agent and four different communication topologies, $\mathcal{F}=\{\mathcal{X}_1,\ldots,\mathcal{X}_4\}$ as shown in Figure~\ref{fig:DFD_net1}-\ref{fig:DFD_net4} with algebraic connectivity $\lambda_2(\mathcal{Q}(\mathcal{X}_1))=0.2279$,    $\lambda_2(\mathcal{Q}(\mathcal{X}_2))=0.6385$,    $\lambda_2(\mathcal{Q}(\mathcal{X}_3))=0.2679$, and $\lambda_2(\mathcal{Q}(\mathcal{X}_4))=0.2603$ and the cardinality of the edge set given by $|\mathcal{E}(\mathcal{X}_1)|=12$, $|\mathcal{E}(\mathcal{X}_2)|=13$, $|\mathcal{E}(\mathcal{X}_3)|=10$ and $|\mathcal{E}(\mathcal{X}_4)|=10$. Thus, $\lambda=0.2279$, $\underline{m}=10$ and $\overline{m}=13$.
The consensus protocol is selected as in~\eqref{Eq:ConsensusProtocolA} with $\Omega(\cdot)$ given in Lemma~\ref{Lemma:RhoLarge}, with $p = 0.2$, $q=1.1$, $a=1$ and $b=2$. According to Lemma~\ref{Lemma:RhoLarge}, $\Omega(\cdot)$ is a predefined-time consensus function, satisfying~\eqref{convex_degree} with $\beta(n)=1$ and $\hat{\Omega}(z)=\gamma(a z^{p+1} + b z^{q+1})$. The gain $\kappa_i$ of the consensus protocol is set as $\kappa_i=\kappa$, $i=1,\ldots,n$, where $\kappa$ is given in~\eqref{Eq:KappaProtA} with $T_c=1$. A simulation of the convergence of the consensus algorithm, under the switching dynamic network $\mathcal{X}_{\sigma(t)}$, with nodes' initial conditions given by $x(t_0)=[23.13,   18.33,    8.01,   20.45,    7.57,  -22.77,   12.40,   -9.22,   22.02,  -10.66]^T$ is given in Figure~\ref{Fig:DFD_plot} (top), where the switching signal $\sigma(t)$ is shown in Figure~\ref{Fig:DFD_plot} (bottom). Notice that the consensus state is the average of the nodes' initial conditions, and that convergence is obtained before $T_c$.
\end{example}

\begin{figure}
\begin{center}  
\begin{subfigure}[b]{3cm}
\includegraphics[width=\linewidth]{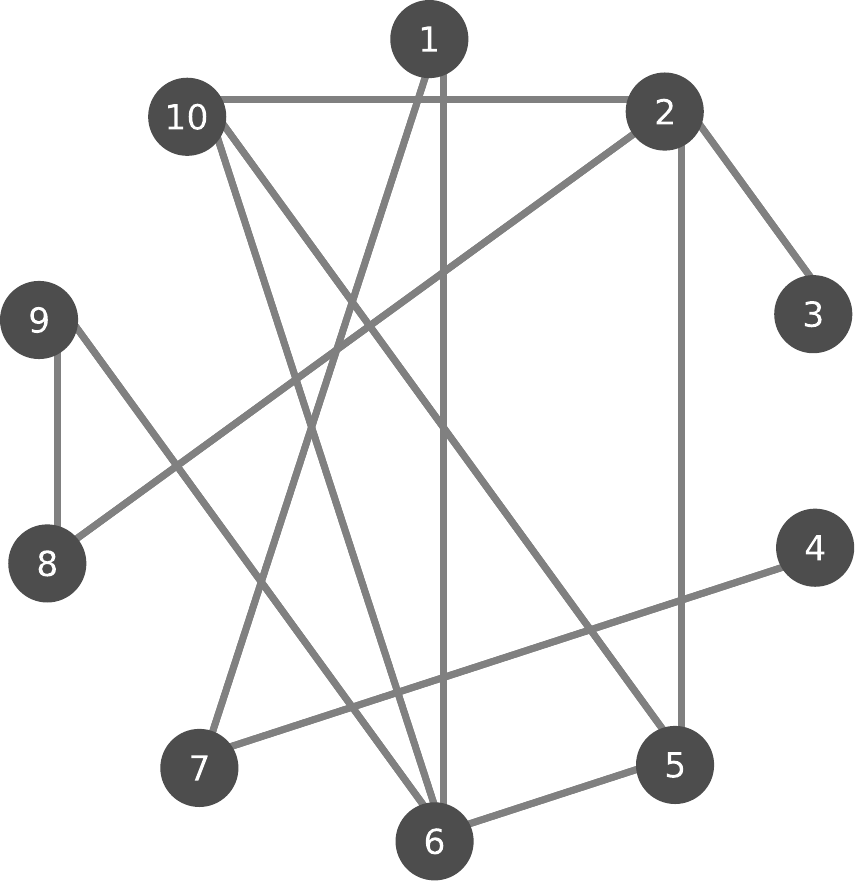}
\caption{$\mathcal{X}_1$}\label{fig:DFD_net1}
\end{subfigure}
\hspace{0.02\textwidth}
\begin{subfigure}[b]{3cm}
\includegraphics[width=\linewidth]{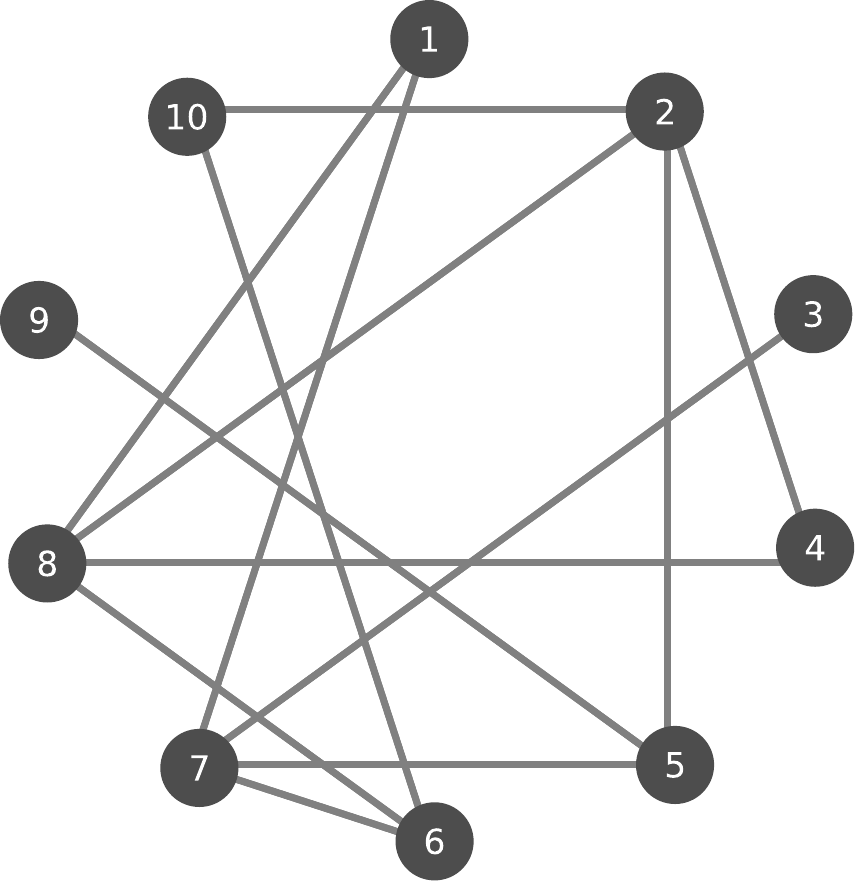}
\caption{$\mathcal{X}_2$}\label{fig:DFD_net2}
\end{subfigure}
\end{center}

\begin{center} 
\begin{subfigure}[b]{3cm}
\includegraphics[width=\linewidth]{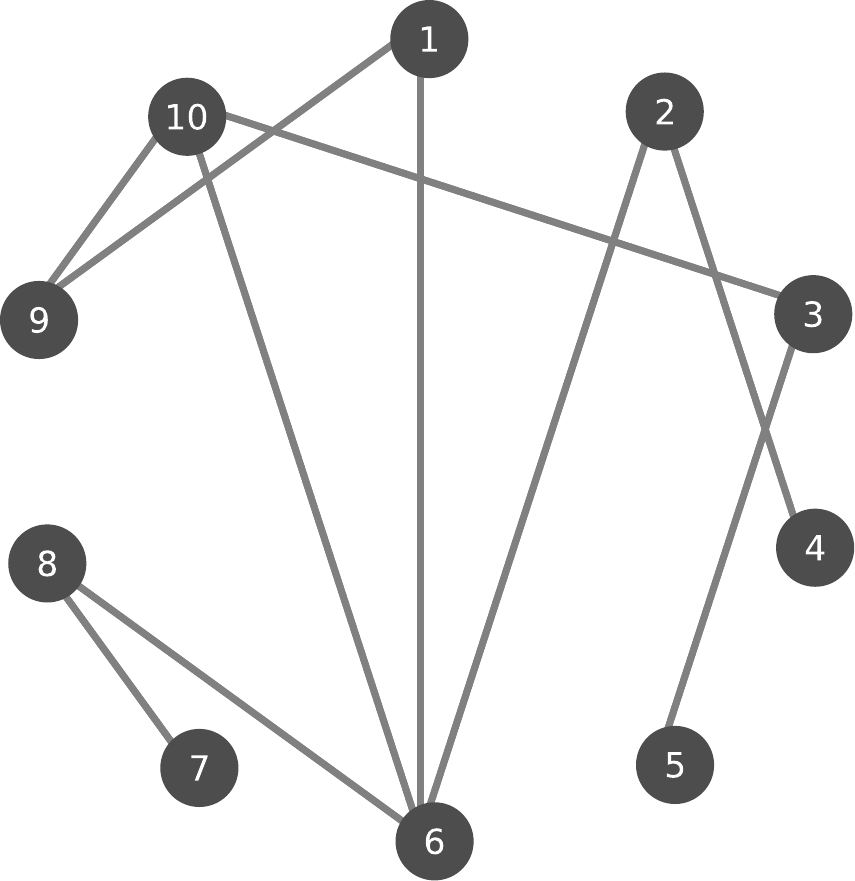}
\caption{$\mathcal{X}_3$}\label{fig:DFD_net3}
\end{subfigure}
\hspace{0.02\textwidth}
\begin{subfigure}[b]{3cm}
\includegraphics[width=\linewidth]{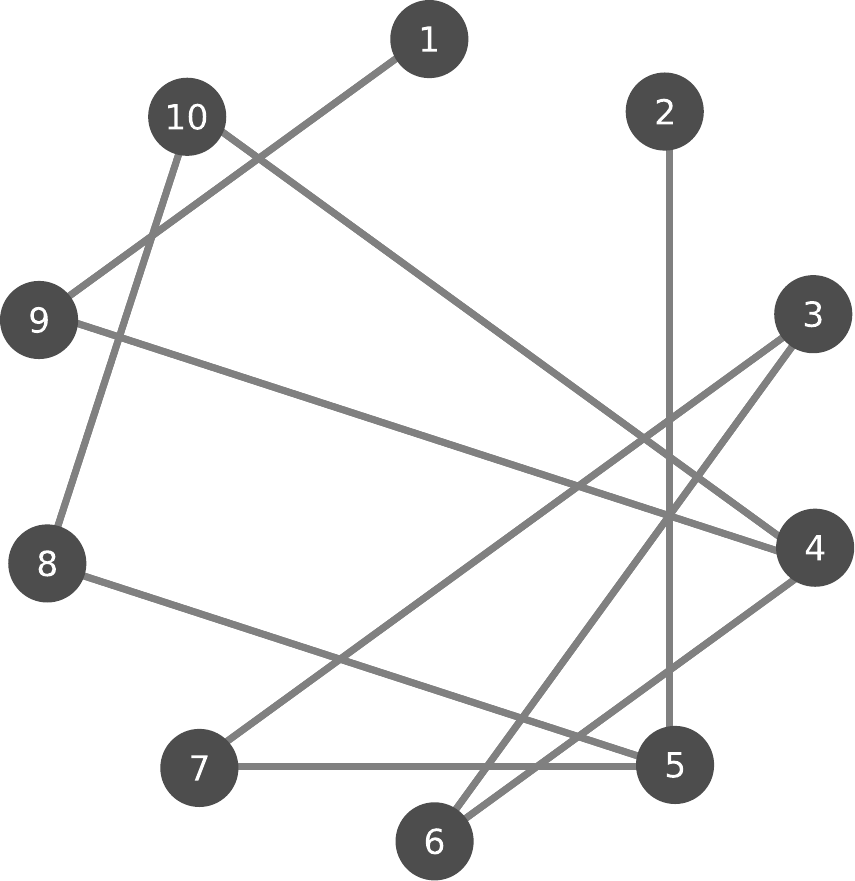}
\caption{$\mathcal{X}_4$}\label{fig:DFD_net4}
\end{subfigure}
\end{center}
\caption{The graphs forming the switching dynamic network of Example~\ref{Ex:Fixed}}
\end{figure}

\begin{figure}[t]
\begin{center}         
\def\svgwidth{12cm}
\begingroup%
  \makeatletter%
  \providecommand\color[2][]{%
    \errmessage{(Inkscape) Color is used for the text in Inkscape, but the package 'color.sty' is not loaded}%
    \renewcommand\color[2][]{}%
  }%
  \providecommand\transparent[1]{%
    \errmessage{(Inkscape) Transparency is used (non-zero) for the text in Inkscape, but the package 'transparent.sty' is not loaded}%
    \renewcommand\transparent[1]{}%
  }%
  \providecommand\rotatebox[2]{#2}%
  \newcommand*\fsize{\dimexpr\f@size pt\relax}%
  \newcommand*\lineheight[1]{\fontsize{\fsize}{#1\fsize}\selectfont}%
  \ifx\svgwidth\undefined%
    \setlength{\unitlength}{420bp}%
    \ifx\svgscale\undefined%
      \relax%
    \else%
      \setlength{\unitlength}{\unitlength * \real{\svgscale}}%
    \fi%
  \else%
    \setlength{\unitlength}{\svgwidth}%
  \fi%
  \global\let\svgwidth\undefined%
  \global\let\svgscale\undefined%
  \makeatother%
  \begin{picture}(1,0.75)%
    \lineheight{1}%
    \setlength\tabcolsep{0pt}%
    \put(0,0){\includegraphics[width=\unitlength]{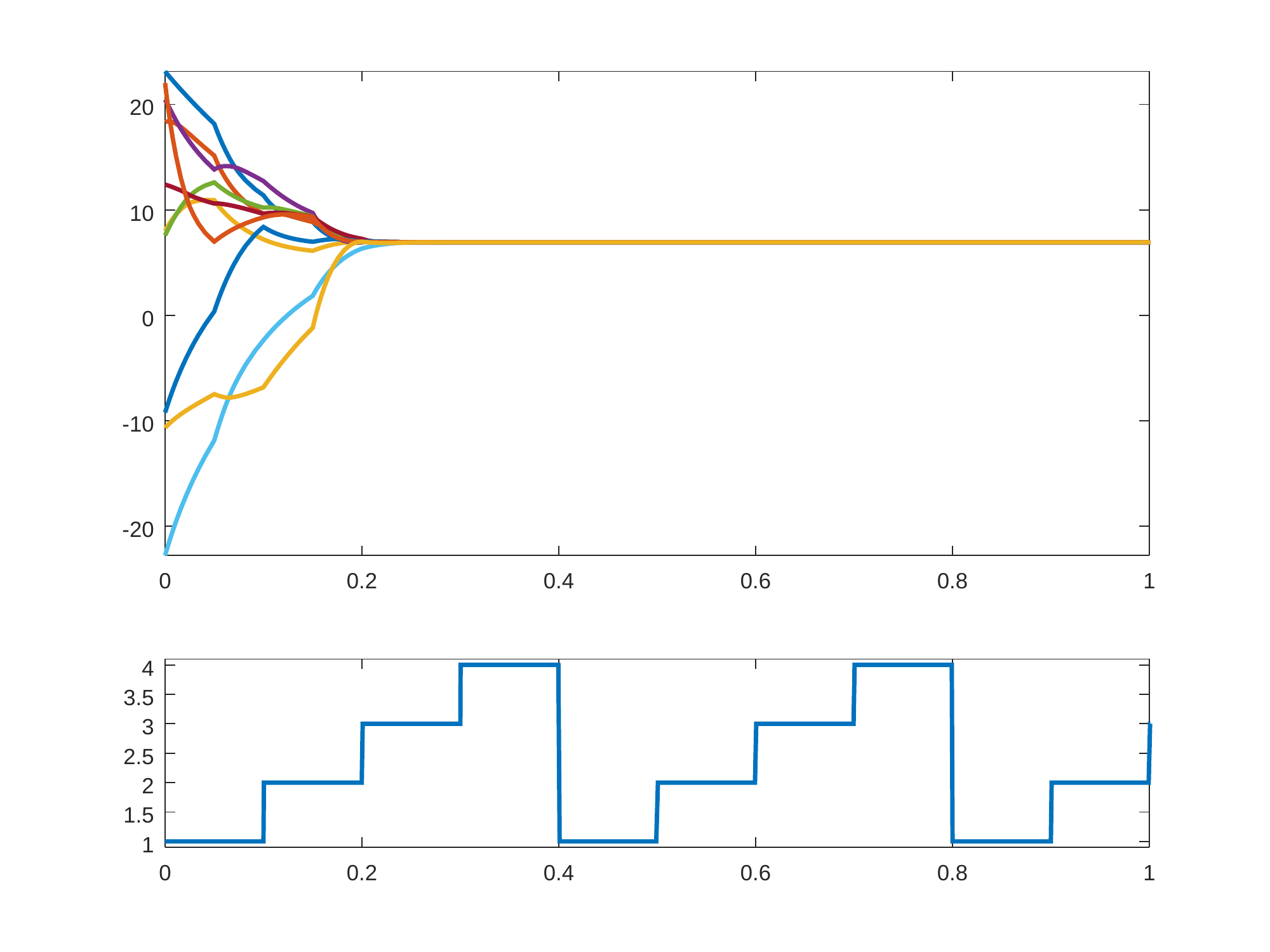}}%
    \put(0.50942738,0.01196306){\color[rgb]{0,0,0}\makebox(0,0)[lt]{\lineheight{1.25}\smash{\begin{tabular}[t]{l}time\end{tabular}}}}%
    \put(0.07595179,0.13017203){\color[rgb]{0,0,0}\rotatebox{90}{\makebox(0,0)[lt]{\lineheight{1.25}\smash{\begin{tabular}[t]{l}$\sigma(t)$\end{tabular}}}}}%
    \put(0.07595179,0.44445774){\color[rgb]{0,0,0}\rotatebox{90}{\makebox(0,0)[lt]{\lineheight{1.25}\smash{\begin{tabular}[t]{l}$x_i(t)$\end{tabular}}}}}%
  \end{picture}%
\endgroup%
\end{center}
\caption{Convergence of the predefined-time consensus algorithm for Example \ref{Example1} under dynamic networks with $T_c$}\label{Fig:DFD_plot}
\end{figure} 

\subsection{A simpler predefined-time consensus algorithm for static networks}

In this subsection we will analyze the consensus protocol proposed in~\eqref{Eq:ConsensusProtocolB}. We first show that~\eqref{Eq:ConsensusProtocolB} is a consensus protocol with fixed-time convergence for static networks and we derive the conditions under which predefined-time convergence bounded is obtained. Afterwards, we show that for dynamic networks,~\eqref{Eq:ConsensusProtocolB} is fixed-time convergence. 

\begin{remark}
Unlike protocol~\eqref{Eq:ConsensusProtocolB} which requires, at each time instant, one evaluation of the nonlinear predefined-time consensus function (hereinafter introduced) per neighbors, the second protocol only requires a single evaluation and ensures predefined-time consensus for static topologies and fixed-time convergence for dynamic networks.
\label{Remark:Simpler}
\end{remark}

\begin{theorem}
\label{Th:ConsensusB}
Let $\mathcal{X}$ be the connectivity graph for the static network, and let $\Omega(\cdot)$ be a predefined-time consensus function with associated $d$, $\beta(\cdot)$  and $\hat{\Omega}(\cdot)$ such that~\eqref{convex_degree} holds. Then, if $\mathcal{X}$ is a connected graph and $\kappa_i>0$, $i=1,\ldots, n$, then ~\eqref{Eq:ConsensusProtocolB} is a consensus algorithm with fixed-time convergence. Moreover, if $\mathcal{X}$ is a connected graph and $\kappa_i\geq\frac{1}{\lambda_2(\mathcal{Q})\beta(n)^{2-d}T_c}$, $i=1,\ldots, n$, then ~\eqref{Eq:ConsensusProtocolB} is a consensus algorithm with predefined-time convergence bounded by $T_c$, i.e. all trajectories of~\eqref{ConsensusDynamicB} converges to the consensus state $x_1=\cdots=x_n$ in predefined-time bounded by $T_c$. 
\end{theorem}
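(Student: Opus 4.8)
The proof follows the pattern of Theorem~\ref{Th:ConsensusA}, but the Lyapunov function must now be built from the quadratic form $x^{T}\mathcal{Q}x$ rather than from the disagreement norm, because under protocol~\eqref{Eq:ConsensusProtocolB} the error vector $e=[e_{1},\dots,e_{n}]^{T}$ satisfies $e=-\mathcal{Q}x$. First I would write the closed loop~\eqref{ConsensusDynamicB} in compact form as $\dot{x}=\mathcal{F}(-\mathcal{Q}x)$ with $\mathcal{F}$ the vector field in~\eqref{Eq:Fp}, and introduce $w=D^{T}x$, so that $\mathcal{Q}x=DD^{T}x=Dw$, hence $e=-Dw$ and $\dot{w}=D^{T}\mathcal{F}(e)$. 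Decomposing $x=\bar{x}\mathbf{1}+\delta$ with $\bar{x}=\frac{1}{n}\mathbf{1}^{T}x$ and $\delta=x-\bar{x}\mathbf{1}\perp\mathbf{1}$, the disagreement obeys the autonomous dynamics $\dot{\delta}=(I-\frac{1}{n}\mathbf{1}\mathbf{1}^{T})\mathcal{F}(-\mathcal{Q}\delta)$ (using $\mathcal{Q}\mathbf{1}=0$), and since $\mathbf{1}^{T}D=0$ we have $w=D^{T}\delta$ and $\|w\|_{2}^{2}=\delta^{T}\mathcal{Q}\delta$. Because $\mathcal{X}$ is connected, $\lambda_{2}(\mathcal{Q})>0$ and the kernel of $\mathcal{Q}$ is spanned by $\mathbf{1}$, so by Lemma~\ref{lemma:Lambda2} the candidate
\[
V=\sqrt{\lambda_{2}(\mathcal{Q})}\,\beta(n)\,\|w\|_{2}=\sqrt{\lambda_{2}(\mathcal{Q})}\,\beta(n)\sqrt{x^{T}\mathcal{Q}x}
\]
is continuous, positive definite and radially unbounded in the disagreement $\delta$, with $V=0$ if and only if $\delta=0$, i.e. if and only if $x_{1}=\cdots=x_{n}$.

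The second step is to differentiate $V$ along the trajectories where $V>0$. The crucial cancellation is $w^{T}\dot{w}=x^{T}\mathcal{Q}\mathcal{F}(e)=(\mathcal{Q}x)^{T}\mathcal{F}(e)=-e^{T}\mathcal{F}(e)=-\sum_{i=1}^{n}\kappa_{i}\Omega(|e_{i}|)$, which gives
\[
\dot{V}=-\sqrt{\lambda_{2}(\mathcal{Q})}\,\beta(n)\,\frac{\sum_{i=1}^{n}\kappa_{i}\Omega(|e_{i}|)}{\|w\|_{2}}.
\]
With $\kappa=\min_{i}\kappa_{i}$ and property~\eqref{convex_degree} of the predefined-time consensus function applied to $(|e_{1}|,\dots,|e_{n}|)$ (note $\|(|e_{1}|,\dots,|e_{n}|)\|_{2}=\|e\|_{2}$), this is bounded above by $-\sqrt{\lambda_{2}(\mathcal{Q})}\,\beta(n)\,\kappa\beta(n)^{-d}\hat{\Omega}(\beta(n)\|e\|_{2})/\|w\|_{2}$.

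Next I would tie the convergence rate back to $V$ through the spectral estimate $\|e\|_{2}^{2}=\|\mathcal{Q}\delta\|_{2}^{2}=\delta^{T}\mathcal{Q}^{2}\delta\ge\lambda_{2}(\mathcal{Q})\,\delta^{T}\mathcal{Q}\delta=\lambda_{2}(\mathcal{Q})\|w\|_{2}^{2}$, which follows from the eigen-decomposition of $\mathcal{Q}$ restricted to $\mathbf{1}^{\perp}$; hence $\beta(n)\|e\|_{2}\ge\sqrt{\lambda_{2}(\mathcal{Q})}\,\beta(n)\|w\|_{2}=V$, and since $\hat{\Omega}$ is non-decreasing, $\hat{\Omega}(\beta(n)\|e\|_{2})\ge\hat{\Omega}(V)$. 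Substituting $\|w\|_{2}=V/(\sqrt{\lambda_{2}(\mathcal{Q})}\,\beta(n))$ yields
\[
\dot{V}\ \le\ -\kappa\,\lambda_{2}(\mathcal{Q})\,\beta(n)^{2-d}\,V^{-1}\hat{\Omega}(V)\ =\ -\kappa\,\lambda_{2}(\mathcal{Q})\,\beta(n)^{2-d}\,\Psi(V),
\]
where $\Psi(z)=z^{-1}\hat{\Omega}(|z|)$ satisfies Assumption~\ref{Assump:AsympSys} by Definition~\ref{Def:ConsensusFunction}. It then remains to invoke Theorem~\ref{thm:weak_pt}: for any $\kappa_{i}>0$ the coefficient $\kappa\lambda_{2}(\mathcal{Q})\beta(n)^{2-d}$ is strictly positive, so $V$—hence the disagreement, hence consensus—converges in a fixed time bounded by $1/(\kappa\lambda_{2}(\mathcal{Q})\beta(n)^{2-d})$; and if $\kappa_{i}\ge 1/(\lambda_{2}(\mathcal{Q})\beta(n)^{2-d}T_{c})$ for all $i$, then $\kappa\lambda_{2}(\mathcal{Q})\beta(n)^{2-d}\ge 1/T_{c}$, so $\dot{V}\le-\frac{1}{T_{c}}\Psi(V)$ and Theorem~\ref{thm:weak_pt} gives $V\to0$—equivalently $x_{1}=\cdots=x_{n}$, which is an equilibrium of~\eqref{ConsensusDynamicB}—in predefined time bounded by $T_{c}$.

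I expect the main obstacle to be obtaining the clean expression for $\dot{V}$: one must differentiate $\|D^{T}x\|_{2}$ rather than $\|x\|_{2}$ or $\|\delta\|_{2}$, so that the cross term collapses via $w^{T}\dot{w}=(\mathcal{Q}x)^{T}\mathcal{F}(e)=-e^{T}\mathcal{F}(e)=-\sum_{i}\kappa_{i}\Omega(|e_{i}|)$; the remaining care concerns the spectral bound $\|\mathcal{Q}\delta\|_{2}^{2}\ge\lambda_{2}(\mathcal{Q})\,\delta^{T}\mathcal{Q}\delta$ that converts the rate into a function of $V$, and the fact that $V$ is positive definite and radially unbounded only after passing to disagreement coordinates, exactly as in Theorem~\ref{Th:ConsensusA}.
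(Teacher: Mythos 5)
Your proposal is correct and takes essentially the same route as the paper's proof: the identical Lyapunov function $V=\sqrt{\lambda_2(\mathcal{Q})}\,\beta(n)\sqrt{x^T\mathcal{Q}x}$ (which you merely rewrite as $\sqrt{\lambda_2(\mathcal{Q})}\,\beta(n)\|D^Tx\|_2$), the same application of~\eqref{convex_degree} to $e=\mathcal{Q}x$, the same spectral bound $x^T\mathcal{Q}^2x\ge\lambda_2(\mathcal{Q})\,x^T\mathcal{Q}x$, and the same final appeal to Theorem~\ref{thm:weak_pt} yielding the bound $1/(\kappa\lambda_2(\mathcal{Q})\beta(n)^{2-d})$. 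The remaining differences (the sign convention $e=-\mathcal{Q}x$, immaterial since $\mathcal{F}$ is odd, and the explicit passage to disagreement coordinates) are cosmetic.
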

\begin{proof}
First notice that the dynamic of the network under the consensus algorithm \eqref{Eq:ConsensusProtocolB} is given by
\begin{equation}
\label{ConsensusDynamicB}
\dot{x}=-\mathcal{F}(Q(\mathcal{X}_{\sigma(t)})x).
\end{equation} where $\mathcal{F}(\cdot)$ is defined as in~\eqref{Eq:Fp}. Thus, the equilibrium subspace is given by $\mathcal{Z}(x)=\{x:x_1=\cdots=x_n\}$, i.e. at the equilibrium, consensus is achieved.

Consider the radially unbounded Lyapunov function candidate
\begin{equation}
V(x) = \sqrt{\lambda_2(\mathcal{Q})}\beta(n)\sqrt{x^T \mathcal{Q} x},
\label{Eq:LyapunovStatic}
\end{equation}
which satisfies that $V(x)=0$ if and only if $x\in\mathcal{Z}(x)$, and whose time-derivative along the trajectory of system~\eqref{ConsensusDynamicB} yields
\begin{align}
\dot{V}(x) = -\frac{\sqrt{\lambda_2(\mathcal{Q})} \beta(n)}{\sqrt{x^T \mathcal{Q} x}}x^T\mathcal{Q}\mathcal{F}(\mathcal{Q}x) 
= -\lambda_2(\mathcal{Q})\beta(n)^2V^{-1}e^T \mathcal{F}(e) 
= -\lambda_2(\mathcal{Q})\beta(n)^{2-d}V^{-1}\sum_{i=1}^n\beta(n)^d\kappa_i\Omega(|e_i|).
\label{Eq:LyapStatDot1}
\end{align}
where $e = \mathcal{Q}x$. Using the fact that $\Omega(\cdot)$ is a predefined time consensus function, then it follows that
\begin{align*}
\sum_{i=1}^n\beta(n)^d\kappa_i\Omega\left(|e_i|\right) \geq \kappa\sum_{i=1}^n\beta(n)^d\Omega\left(|e_i|\right) \geq
\kappa\hat{\Omega}\left(\beta(n)\|e\|_2\right),
\end{align*}
where $\kappa = \min\{\kappa_1,\dots,\kappa_n\}$.
Moreover, it follows from Lemma~\ref{Lemma:Hardy} and Lemma \ref{lemma:Lambda2} that
\begin{equation*}
\|e\|_2 = \sqrt{e^Te} = \sqrt{x^T \mathcal{Q}^2x} \geq \sqrt{\lambda_2(\mathcal{Q})}\sqrt{x^T \mathcal{Q}x}=\beta(n)^{-1}V.
\end{equation*}
Therefore:
\begin{align}
\label{prot1_res12}
\kappa\hat{\Omega}\left(\beta(n)\|e\|_2\right) \geq 
\kappa\hat{\Omega}\left(V\right) = \kappa V\Psi(V).
\end{align}

Hence, it follows from~\eqref{Eq:LyapStatDot1}, and \eqref{prot1_res12} that
\begin{equation}
\dot{V}(x) \leq -\kappa\lambda_2(\mathcal{Q})\beta(n)^{2-d}\Psi(V).
\end{equation}
and, according to Theorem~\ref{thm:weak_pt}, protocol~\eqref{Eq:ConsensusProtocolB}
guarantees that $x$ converges to $\mathcal{Z}(x)=\{x:x_1=\cdots=x_n\}$ in a fixed-time bounded by
$$
\sup_{x_0 \in \mathbb{R}^n} T(x_0)\leq\frac{1}{\kappa\lambda_2(\mathcal{Q})\beta(n)^{2-d}}
$$
Thus, if
\begin{equation}
\label{Eq:GainConsB}
\kappa_i\geq\kappa=\frac{1}{\lambda_2(\mathcal{Q})\beta(n)^{2-d}T_c}
\end{equation}
then
\begin{equation}
\dot{V}(x) \leq -\frac{1}{T_c}\Psi(V),
\end{equation}
and according to Theorem~\ref{thm:weak_pt}, protocol~\eqref{Eq:ConsensusProtocolB}
guarantees that the consensus is achieved before a predefined-time $T_c$. 
\end{proof}

\begin{example}
\label{Example2}
Consider a network $\mathcal{X}$ composed of 10 agents with communication topology as shown in Figure~\ref{Fig:FDD_net}, which has an algebraic connectivity of $\lambda_2(\mathcal{Q}) = 0.27935$. 
Let the initial condition be $x(t_0)=[3.65,-8.99,-3.26,-0.03, 4.52,13.53,15.85,-0.53,-9.97,-13.91]^T$. Then, the convergence of  algorithm~\eqref{Eq:ConsensusProtocolB} under the graph topology $\mathcal{X}$ using $\Omega(z)=\frac{1}{p}\exp(z^p)z^{2-p}$, $T_c = 1$ and $p=0.5$ is shown in Figure~\ref{Fig:FDD_plot} where $\kappa_i$ is selected as $\kappa_i=\kappa$, $i=1,\ldots,n$ with $\kappa$ as in~\eqref{Eq:GainConsB} with $\beta(n)=\frac{1}{n}$.
\end{example}

\begin{figure}[t]
\begin{center}         
\includegraphics[width=5cm]{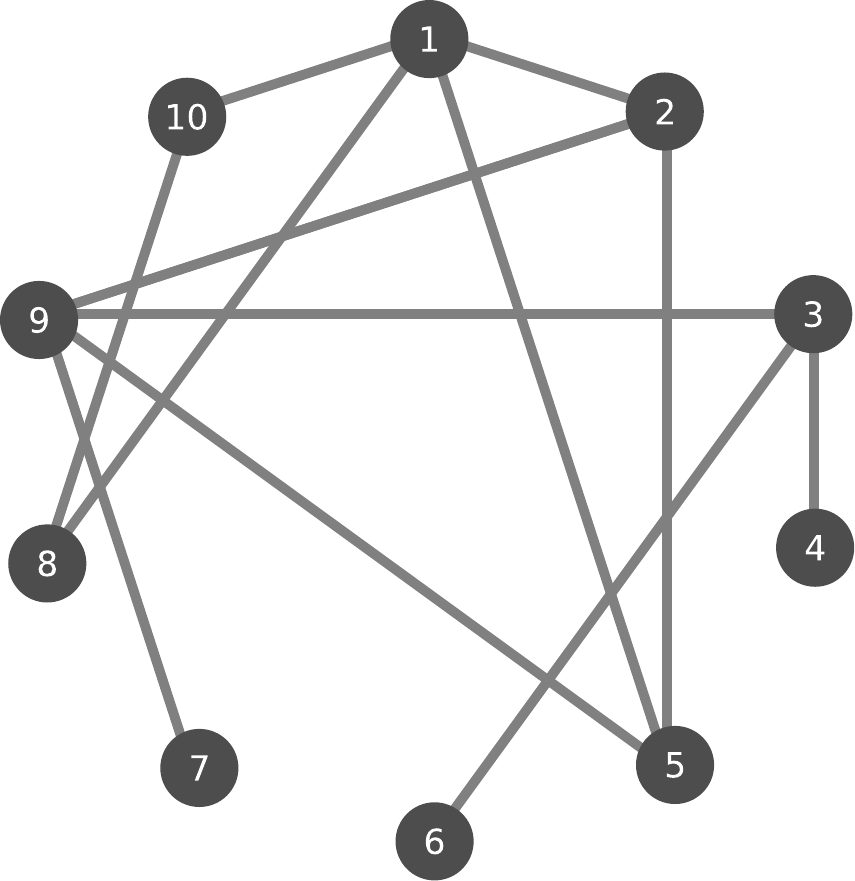}
\end{center}
\caption{Network $\mathcal{X}$ used for Example \ref{Example2}}\label{Fig:FDD_net}
\end{figure}

\begin{figure}[t]
\def\svgwidth{12cm}
\begin{center}         
\begingroup%
  \makeatletter%
  \providecommand\color[2][]{%
    \errmessage{(Inkscape) Color is used for the text in Inkscape, but the package 'color.sty' is not loaded}%
    \renewcommand\color[2][]{}%
  }%
  \providecommand\transparent[1]{%
    \errmessage{(Inkscape) Transparency is used (non-zero) for the text in Inkscape, but the package 'transparent.sty' is not loaded}%
    \renewcommand\transparent[1]{}%
  }%
  \providecommand\rotatebox[2]{#2}%
  \newcommand*\fsize{\dimexpr\f@size pt\relax}%
  \newcommand*\lineheight[1]{\fontsize{\fsize}{#1\fsize}\selectfont}%
  \ifx\svgwidth\undefined%
    \setlength{\unitlength}{420bp}%
    \ifx\svgscale\undefined%
      \relax%
    \else%
      \setlength{\unitlength}{\unitlength * \real{\svgscale}}%
    \fi%
  \else%
    \setlength{\unitlength}{\svgwidth}%
  \fi%
  \global\let\svgwidth\undefined%
  \global\let\svgscale\undefined%
  \makeatother%
  \begin{picture}(1,0.75)%
    \lineheight{1}%
    \setlength\tabcolsep{0pt}%
    \put(0,0){\includegraphics[width=\unitlength]{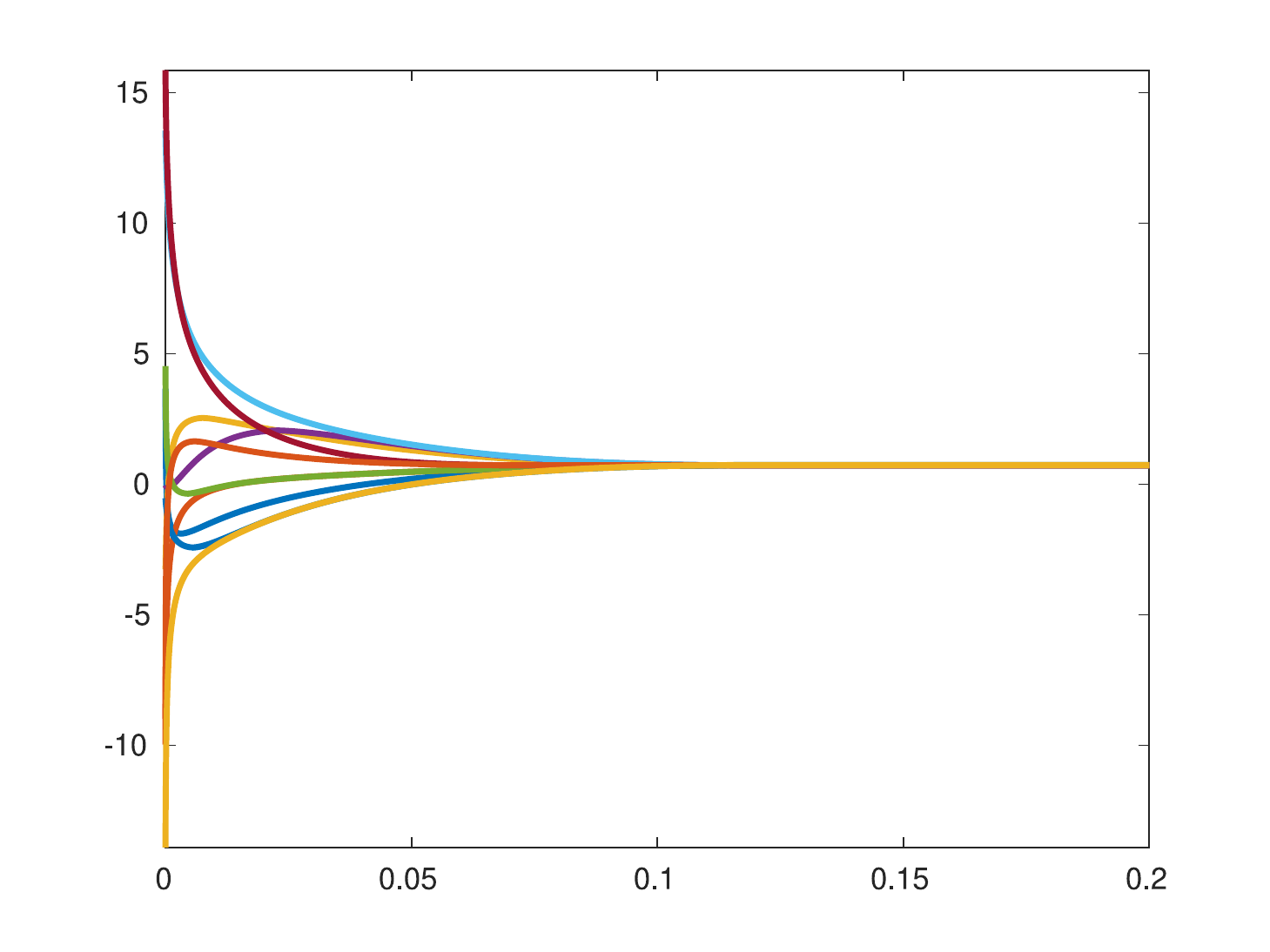}}%
    \put(0.49054779,0.01822734){\color[rgb]{0,0,0}\makebox(0,0)[lt]{\lineheight{1.25}\smash{\begin{tabular}[t]{l}time\end{tabular}}}}%
    \put(0.07238055,0.32870163){\color[rgb]{0,0,0}\rotatebox{90}{\makebox(0,0)[lt]{\lineheight{1.25}\smash{\begin{tabular}[t]{l}$x_i(t)$\end{tabular}}}}}%
  \end{picture}%
\endgroup%
\end{center}
\caption{Convergence of the consensus algorithm for Example \ref{Example2} with $T_c=1$}
\label{Fig:FDD_plot}
\end{figure}

\begin{remark}
Notice that the convergence-time to the consensus state is a function of the algebraic connectivity of the network~\cite{Olfati-Saber2007,Ning2017b}. Hence, to compute the gain $\kappa$ to obtain predefined-time convergence, we assume knowledge of a lower-bound of the algebraic connectivity of the network. For static networks, there exist several algorithms for distributively estimating the algebraic connectivity~\cite{Li2013b,Aragues2014,Montijano2017}. For instance, the algorithm proposed in~\cite{Aragues2014} provides an asymptotic estimation, which is always a lower bound of the true algebraic connectivity. Another scenario is, given an estimate of the size of the network~\cite{Shames2012,You2017} to consider a worst-case $\lambda_2$ from information specific to the problem. 
\end{remark}

\begin{remark}
\label{Remark:Sym}
We have shown in Theorem~\ref{Th:ConsensusB}, using the Lyapunov function~\eqref{Eq:LyapunovStatic}, that~\eqref{Eq:ConsensusProtocolB} is a predefined-time consensus algorithm for static networks. However, since the Lyapunov function~\eqref{Eq:LyapunovStatic} is a function of the Laplacian matrix of the graph, then, the predefined-time convergence for switching dynamic networks cannot be justified as in the proof of Theorem~\ref{Th:ConsensusA}. To show that (at least) fixed-time stability is maintained under an arbitrary switching signal, non-smooth Lyapunov analysis~\cite{Bacciotti2006} is used in the following theorem. Notice that in~\cite{Zuo2014,Ning2017b}, the consensus protocol~\eqref{Eq:ConsensusProtocolB} using the predefined-time consensus function given in Lemma~\ref{Lemma:RhoLarge} was proposed, but restricted only to the case where $p=1-s$ and $q=1+s$ with $0<s<1$, which was justified only as a fixed-time consensus protocol for static networks.
\end{remark}

\begin{theorem}
\label{Th:ConsensusB2}
If $\kappa_i>0$, $i=1,\ldots,n$, then~\eqref{Eq:ConsensusProtocolB} is a consensus algorithm, with fixed-time convergence, for dynamic networks arbitrarily switching among connected graphs.
\end{theorem}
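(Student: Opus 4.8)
The plan is to carry out a non-smooth (Filippov) Lyapunov analysis in the spirit of~\cite{Bacciotti2006}, using the spread function
\begin{equation*}
V(x)=\max_{i}x_i-\min_{i}x_i
\end{equation*}
as a common weak Lyapunov function for all subsystems of~\eqref{ConsensusDynamicB}. The first ingredient is the sign structure of the protocol: since $\Omega(\cdot)$ is monotonically increasing with $\Omega(0)=0$, one has $\Omega(|e_i|)\geq 0$, so $u_i=\kappa_i e_i^{-1}\Omega(|e_i|)$ has the same sign as $e_i=\sum_{j\in\mathcal{N}_i}a_{ij}(x_j-x_i)$. In particular, if node $i$ attains $\max_j x_j$ then $e_i\leq 0$ and $u_i\leq 0$, and symmetrically $u_i\geq 0$ at any node attaining $\min_j x_j$.

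Using this together with the set-valued derivative of $\max_i x_i$ (resp. $\min_i x_i$) along the differential inclusion associated with~\eqref{ConsensusDynamicB}, I would show that $\max_i x_i$ is non-increasing and $\min_i x_i$ is non-decreasing along every subsystem $\mathcal{X}_l\in\mathcal{F}$, hence $V$ is non-increasing along any switched solution. This already yields that the consensus set $\mathcal{Z}=\{x:x_1=\cdots=x_n\}$ is invariant, that every trajectory remains in the compact box determined by its own initial spread, and that solutions are complete on $[t_0,\infty)$.

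For the fixed-time part, the obstacle is that the Lyapunov function $V_l(x)=\sqrt{\lambda_2(\mathcal{Q}(\mathcal{X}_l))}\,\beta(n)\sqrt{x^T\mathcal{Q}(\mathcal{X}_l)x}$ used in the proof of Theorem~\ref{Th:ConsensusB} is topology-dependent and thus not common. I would bridge this as follows. Since $\mathcal{F}$ is finite and each $\mathcal{X}_l$ connected, writing $\delta$ for the orthogonal projection of $x$ onto $\mathbf{1}^{\perp}$ and using Lemma~\ref{lemma:Lambda2} together with $\tfrac12 V(x)\leq\|\delta\|_2\leq\sqrt{n}\,V(x)$, there are uniform constants $0<c_1\leq c_2$ with $c_1 V(x)\leq V_l(x)\leq c_2 V(x)$ for all $l$ and all $x$. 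On each dwell interval on which $\sigma=l$, the estimate obtained in the proof of Theorem~\ref{Th:ConsensusB} gives $\dot V_l\leq-\kappa\,\lambda_2(\mathcal{Q}(\mathcal{X}_l))\,\beta(n)^{2-d}\Psi(V_l)\leq-(1/T_{\max})\Psi(V_l)$, where $\kappa=\min_i\kappa_i>0$ and $T_{\max}=\bigl(\kappa\,\beta(n)^{2-d}\min_l\lambda_2(\mathcal{Q}(\mathcal{X}_l))\bigr)^{-1}$. Introducing the primitive $G(r)=\int_0^r\Phi(s)\,ds$ from Assumption~\ref{Assump:AsympSys}, so that $\Phi(r)\Psi(r)=1$ for $r>0$, this reads $\tfrac{d}{dt}G(V_{\sigma(t)}(x(t)))\leq-1/T_{\max}$ between switches, while at each switching instant $G(V_{\sigma}(x))$ can only jump upward, by at most $G(c_2V(x))-G(c_1V(x))$. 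Combining the constant-rate decrease between switches with the facts that $G(V_{\sigma}(x(t)))\leq G(c_2 V(x(t)))\leq G(c_2 V(x(t_0)))\leq 1$ is uniformly bounded (by monotonicity of $V$) and that the jump bound tends to $0$ as $V\to0$, together with the positive minimum dwell time, one concludes that $V(x(t))$ reaches $0$ — i.e. consensus is attained — within a time bounded independently of $x_0$.

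The hard part is precisely this last bookkeeping: showing that the upward jumps of the non-common Lyapunov function at the (dwell-time–separated, but possibly numerous) switching instants cannot accumulate so as to stall convergence. This is where the monotonicity of the spread $V$ from the second step, the finiteness of $\mathcal{F}$, and the minimum-dwell-time assumption must be used jointly; everything else is a direct adaptation of the single-topology argument behind Theorem~\ref{Th:ConsensusB} and of the comparison principle underlying Theorem~\ref{thm:weak_pt}.
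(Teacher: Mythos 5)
Your first half coincides with the paper's: the authors also take the spread $V(x)=\max_i x_i-\min_i x_i$ in \eqref{Eq:LyapMaxMin} as a common (nonsmooth) Lyapunov function, establish $\mathcal{D}^{+}V\leq 0$ from exactly the sign argument you give ($e_j\leq 0$ at a maximizer, $e_i\geq 0$ at a minimizer, and $z\mapsto z^{-1}\Omega(|z|)$ preserves sign), and then invoke LaSalle plus the common-Lyapunov-function theorem for asymptotic consensus under arbitrary switching. Up to that point your proposal is correct and essentially identical to the paper.

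The genuine gap is in the fixed-time part, and it is precisely the step you flag as ``the hard part'' and then do not carry out. Your multiple-Lyapunov-function bookkeeping does not close as sketched: between switches $G(V_{\sigma(t)}(x(t)))$ decreases at rate at least $1/T_{\max}$, so over a dwell interval of length $\tau_d$ the guaranteed decrease is $\tau_d/T_{\max}$, while at each switch $G(V_\sigma(x))$ may jump up by as much as $\sup_{r>0}\bigl(G(c_2 r)-G(c_1 r)\bigr)$. Nothing in the hypotheses relates the minimum dwell time $\tau_d$ to $T_{\max}$ or to the gap $c_2/c_1$ (which already exceeds $1$ because $\lambda_2$ and $\lambda_n$ differ across the finite family), so the net change per cycle need not be negative and the argument can stall at a strictly positive level of $V$. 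The observation that the jump $G(c_2V)-G(c_1V)$ vanishes as $V\to 0$ cannot rescue this: at best it yields asymptotic, not fixed-time, convergence, since the region where the jump is small is entered only in the limit. So as written the proposal does not prove the theorem.

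The paper avoids this machinery entirely. Its closing argument stays with the monotone spread: under a \emph{static} connected topology $\mathcal{X}_l$, Theorem~\ref{Th:ConsensusB} shows \eqref{Eq:LyapunovStatic} (hence the spread) vanishes within $T_l=\bigl(\kappa\lambda_2(\mathcal{Q}(\mathcal{X}_l))\beta(n)^{2-d}\bigr)^{-1}$; since the spread never increases under any topology, the authors conclude that consensus is reached no later than the first instant at which the \emph{cumulative} activation time of some single topology $\mathcal{X}_l$ exceeds $T_l$, and finiteness of $\mathcal{F}$ (a pigeonhole over the $m$ topologies) turns this into a bound independent of $x_0$. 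This ``accumulated activation time'' device is the missing idea in your write-up; it replaces the jump bookkeeping altogether. (Be aware, though, that even this step is stated somewhat informally in the paper: monotonicity of the spread does not by itself guarantee that the topology-$l$ Lyapunov function $V_l$ has not grown between two non-contiguous activation intervals of $\mathcal{X}_l$, so a fully rigorous version still needs an argument of the kind you were attempting, or a common strict Lyapunov function as in Theorem~\ref{Th:ConsensusA}.)
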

\begin{proof}
Let $\mathcal{X}_\sigma(t)$ be a switching dynamic network,  and consider the (Lipschitz continuous) Lyapunov function candidate
\begin{equation}
    V(x)=\max(x_1,\cdots,x_n)-\min(x_1,\cdots,x_n),
\label{Eq:LyapMaxMin}
\end{equation}
which is differentiable almost everywhere and positive definite. Notice that $V(x)=0$ if and only if $x\in\mathcal{Z}(x)=\{x:x_1=\cdots=x_n\}$. Let $\mathcal{X}_l$ be the current graph topology, then, if $x_j=\max(x_1,\cdots,x_n)$ and $x_i=\min(x_1,\cdots,x_n)$ for a nonzero interval
\begin{equation}
\label{Eq:LyapMaxMinA}
\mathcal{D}^{+}V(x)=\kappa_je_j^{-1}\Omega(|e_j|)-\kappa_ie_i^{-1}\Omega(|e_i|),
\end{equation}
where $\mathcal{D}^{+}V(x)$ is the Dini derivative, and $e_i$ and $e_j$ are as in~\eqref{Eq:ConsensusProtocolB}. However, since $x_j\geq x_k$, $\forall k\in\mathcal{V}(\mathcal{X})$, it follows that $\sign(e_j)=-1$ whenever $e_j\neq 0$, and thus $e_j^{-1}\Omega(|e_j|)\leq0$. By a similar argument it follows that $e_i^{-1}\Omega(|e_i|)\geq 0$. Thus, $\dot{V}(x)\leq 0$.

Notice that the largest invariant set such that $\mathcal{D}^{+}V(x)=0$ is  $\{x:\max(x_1,\cdots,x_n)=\min(x_1,\cdots,x_n)\}$, because otherwise, since the graph is connected, there is a path from $j$ to $i$, such that there exists a node $k$ that belongs to such path, such that $x_j=x_k=\max(x_1,\cdots,x_n)$ but $x_k\neq x_{k'}$ for some $k'\in\mathcal{N}_k(\mathcal{X}_l)$. Thus, $e_k\neq0$ and in turn makes $e_j<0$ and, therefore, $\mathcal{D}^{+}V(x)=0$ does not hold.
Thus, using LaSalle's invariance principle~\cite{Khalil2002},~ \eqref{ConsensusDynamicB} converges asymptotically to $\{x:\max(x_1,\cdots,x_n)=\min(x_1,\cdots,x_n)\}$, which implies that $x_1=\cdots=x_n$, i.e. consensus is achieved asymptotically. 

Now, consider a switched dynamic network composed of connected graphs and driven by the arbitrary switching signal $\sigma(t)$. Since~\eqref{Eq:LyapMaxMinA} is a common Lyapunov function for the evolution of the system under each graph $\mathcal{X}_l$, the asymptotic convergence of the system is preserved in a dynamic network under arbitrary switching~\cite{Liberzon2003}.

Finally, since we have shown in the proof of Theorem~\ref{Th:ConsensusB} that, if the topology is static and connected, ~\eqref{Eq:LyapunovStatic} goes to zero in a fixed-time bounded by the constant $\frac{1}{\kappa\lambda_2(\mathcal{Q})\beta(n)^{2-d}}$ is the active topology, then, under this scenario, ~\eqref{Eq:LyapMaxMinA} also goes to zero in a fixed-time bounded by the same constant,  because if $x$ is such that~\eqref{Eq:LyapunovStatic} is zero then also ~\eqref{Eq:LyapMaxMinA} is zero. 
Since, in the switching case,~\eqref{Eq:LyapMaxMinA} is still decreasing and continuous, then it follows that~\eqref{Eq:LyapMaxMinA} goes to zero in a fixed-time lower or equal than the lowest time such that there exists a connected topology $\mathcal{X}_l$, such that the sum of time intervals in which $\mathcal{X}_l$ has been active is greater than $\frac{1}{\kappa\lambda_2(\mathcal{Q})\beta(n)^{2-d}}$. Since this upper bound is independent of the initial state of the agents, then fixed-time convergence is obtained under switching topologies.
\end{proof}

\begin{example}
\label{Ex:Fixed}
Consider a switching dynamic network $\mathcal{X}_{\sigma(t)}$ with $\mathcal{F}=\{\mathcal{X}_1,\mathcal{X}_2,\mathcal{X}_3,\mathcal{X}_4\}$ with graphs $\mathcal{X}_i$, $i=1,\ldots,4$ shown in Figure~\ref{fig:FDD_net1}-\ref{fig:FDD_net4}. Figure~\ref{Fig:FDD_plot_sw} (top) show the evolution of the agents' state, with the consensus protocol~\eqref{Eq:ConsensusProtocolB}, under the switching dynamic network $\mathcal{X}_{\sigma(t)}$, with switching signal $\sigma(t)$ shown in Figure~\ref{Fig:FDD_plot_sw} (bottom) where $\kappa$ is selected as $\kappa=$.

\begin{figure}
\begin{center}  
\begin{subfigure}[b]{3cm}
\includegraphics[width=\linewidth]{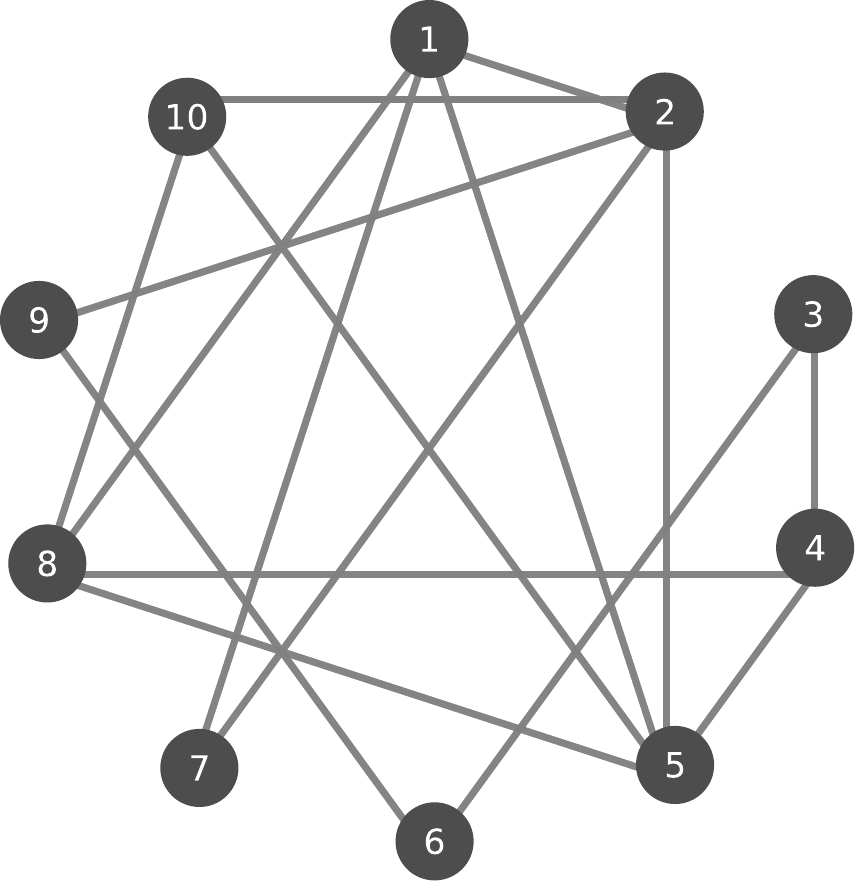}
\caption{$\mathcal{X}_1$}\label{fig:FDD_net1}
\end{subfigure}
\hspace{0.02\textwidth}
\begin{subfigure}[b]{3cm}
\includegraphics[width=\linewidth]{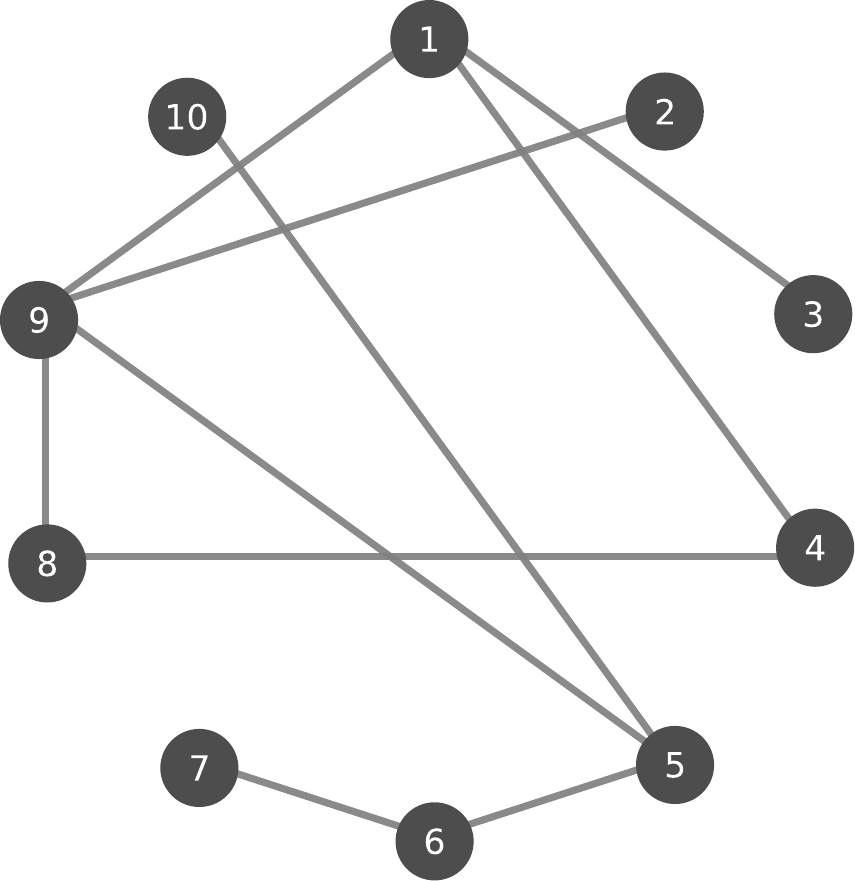}
\caption{$\mathcal{X}_2$}\label{fig:FDD_net2}
\end{subfigure}
\end{center}

\begin{center} 
\begin{subfigure}[b]{3cm}
\includegraphics[width=\linewidth]{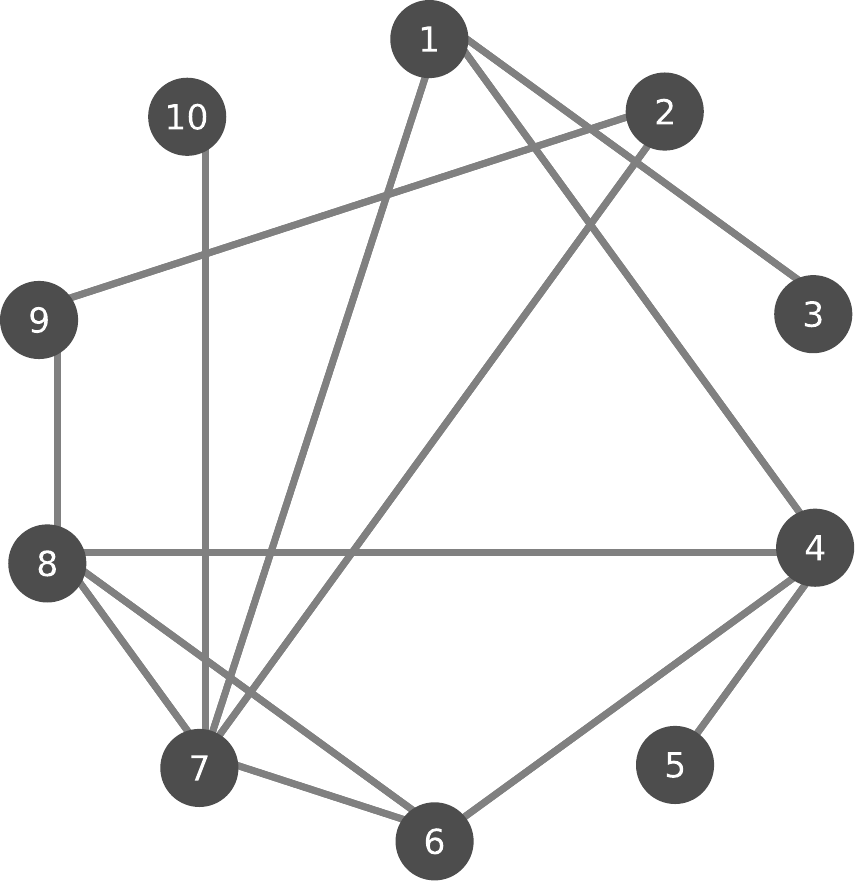}
\caption{$\mathcal{X}_3$}\label{fig:FDD_net3}
\end{subfigure}
\hspace{0.02\textwidth}
\begin{subfigure}[b]{3cm}
\includegraphics[width=\linewidth]{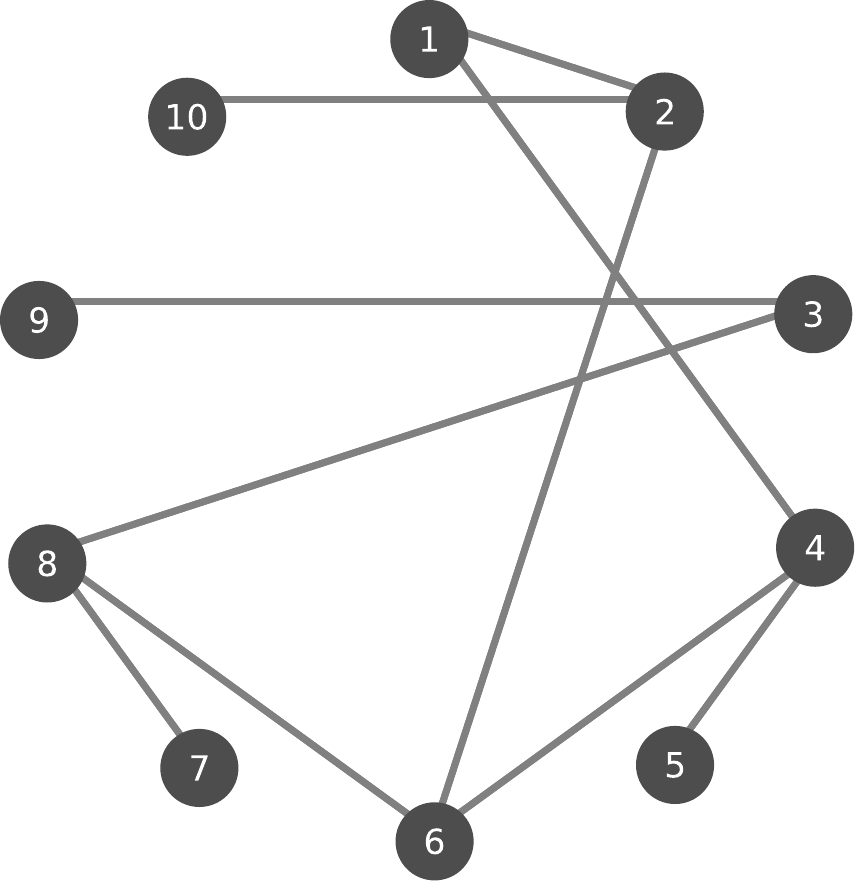}
\caption{$\mathcal{X}_4$}\label{fig:FDD_net4}
\end{subfigure}
\end{center}
\caption{The graphs forming the switching dynamic network of Example~\ref{Ex:Fixed}}
\end{figure}

\begin{figure}[t]
\begin{center}         
\def\svgwidth{12cm}
\begingroup%
  \makeatletter%
  \providecommand\color[2][]{%
    \errmessage{(Inkscape) Color is used for the text in Inkscape, but the package 'color.sty' is not loaded}%
    \renewcommand\color[2][]{}%
  }%
  \providecommand\transparent[1]{%
    \errmessage{(Inkscape) Transparency is used (non-zero) for the text in Inkscape, but the package 'transparent.sty' is not loaded}%
    \renewcommand\transparent[1]{}%
  }%
  \providecommand\rotatebox[2]{#2}%
  \newcommand*\fsize{\dimexpr\f@size pt\relax}%
  \newcommand*\lineheight[1]{\fontsize{\fsize}{#1\fsize}\selectfont}%
  \ifx\svgwidth\undefined%
    \setlength{\unitlength}{420bp}%
    \ifx\svgscale\undefined%
      \relax%
    \else%
      \setlength{\unitlength}{\unitlength * \real{\svgscale}}%
    \fi%
  \else%
    \setlength{\unitlength}{\svgwidth}%
  \fi%
  \global\let\svgwidth\undefined%
  \global\let\svgscale\undefined%
  \makeatother%
  \begin{picture}(1,0.75)%
    \lineheight{1}%
    \setlength\tabcolsep{0pt}%
    \put(0,0){\includegraphics[width=\unitlength,page=1]{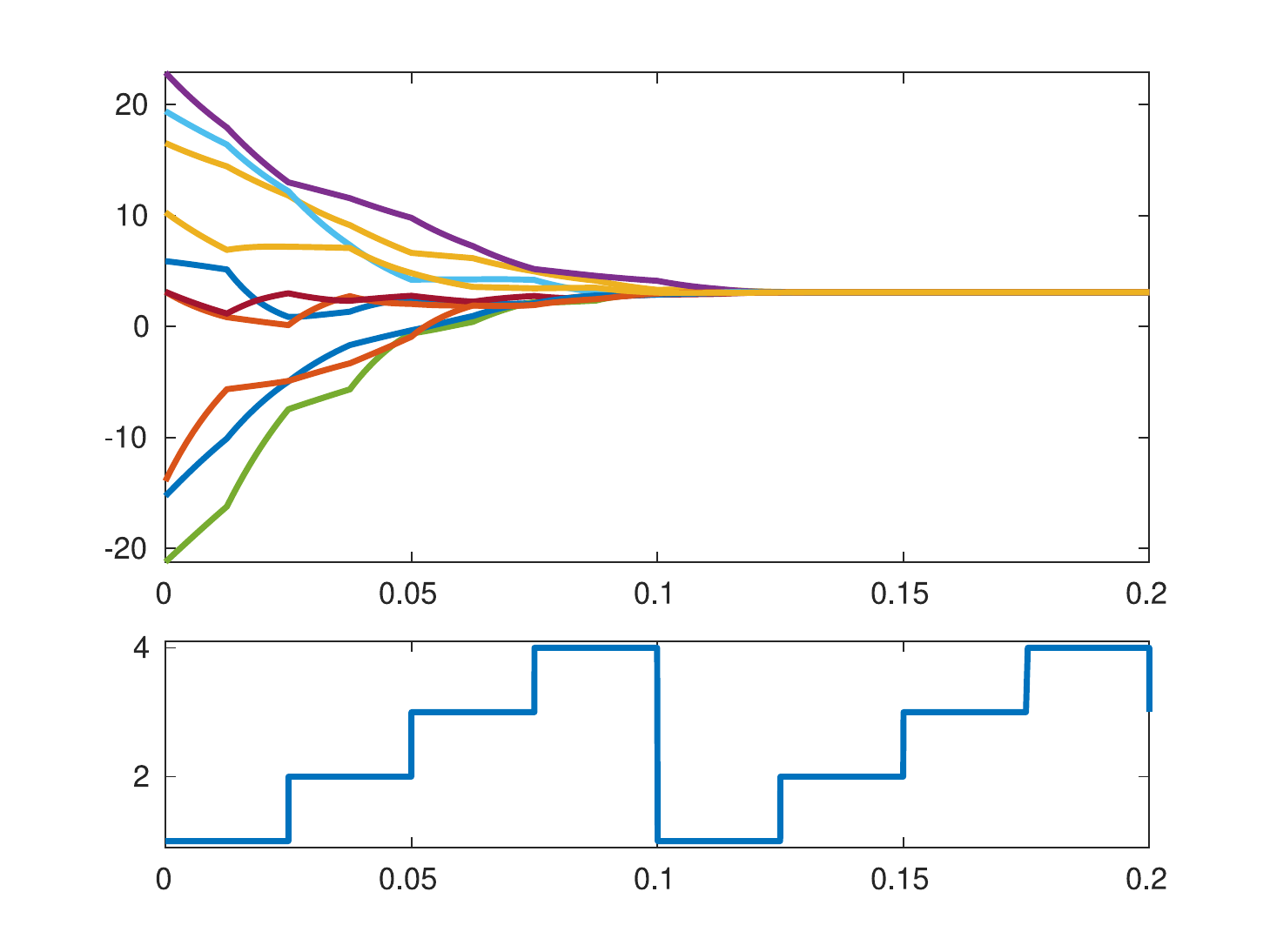}}%
    \put(0.06328819,0.45308234){\color[rgb]{0,0,0}\rotatebox{90}{\makebox(0,0)[lt]{\lineheight{1.25}\smash{\begin{tabular}[t]{l}$x_i(t)$\end{tabular}}}}}%
    \put(0.06328819,0.11355161){\color[rgb]{0,0,0}\rotatebox{90}{\makebox(0,0)[lt]{\lineheight{1.25}\smash{\begin{tabular}[t]{l}$\sigma(t)$\end{tabular}}}}}%
    \put(0.48694806,0.01448054){\color[rgb]{0,0,0}\makebox(0,0)[lt]{\lineheight{1.25}\smash{\begin{tabular}[t]{l}time\end{tabular}}}}%
  \end{picture}%
\endgroup%
\end{center}
\caption{Convergence of the predefined-time consensus algorithm for Example~\ref{Example1} under dynamic networks with $T_c$}\label{Fig:FDD_plot_sw}
\end{figure}

\end{example}

\section{Application to predefined-time multi-agent formation control}
\label{Sec:Formation}
In this section, it is described how the proposed method can be applied to achieve a distributed formation with predefined-time convergence in a multi-agent system, where agents only have information on the relative displacement of their neighbors.

Let $z_i$ be the $i$-th agent position and $d_{ji}^*$ be the displacement requirement between the $i-$th and the $j-$th agent in the desired formation. A displacement requirement $d_{ji}^*$ for all $i,j\in\mathcal{V}$ is said to be feasible if there exists a position $z^*$ such that $\forall i,j\in\mathcal{V}$, $z_j^*-z_i^*=d_{ji}^*$ where $z_i^*$ and $z_j^*$ are the $i$-th and $j$-th element of $z^*$, respectively. The aim of the multi-agent formation control problem is to guarantee that each agent converges to a position $z$ where the displacement requirement is fulfilled. 

\begin{corollary}
Let $z$ represent the position of the agents and let $z^*=[z_1^*\  \cdots \ z_n^*]^T$ be a feasible displacement requirement for a desired formation. The following position update rules 
\begin{enumerate}[label=(\roman*)]
    \item 
\begin{equation}
\label{Eq:FormConsensusProtocolA} 
\dot{z}_i=\kappa\sum_{j\in\mathcal{N}_i(\mathcal{X}_{\sigma(t)})}\sqrt{a_{ij}}\hat{e}_{ij}^{-1}\Omega(\hat{e}_{ij}), \ \ \ \ \ \hat{e}_{ij}=\sqrt{a_{ij}}(z_j(t)-z_i(t)-d_{ji}^*),
\end{equation}
\item
\begin{equation}
\label{Eq:FormConsensusProtocolB}
\dot{z}_i=\kappa\hat{e}_i^{-1}\Omega(\hat{e}_i), \ \ \ \ \ \hat{e}_i=\sum_{j\in\mathcal{N}_i(\mathcal{X}_{\sigma(t)})}a_{ij}(z_j(t)-z_i(t)-d_{ji}^*).
\end{equation}
\end{enumerate}
solve the displacement formation control in predefined-time if $k$ is selected as in Theorem~\ref{Th:ConsensusA} for~\eqref{Eq:FormConsensusProtocolA} and as in Theorem~\ref{Th:ConsensusB} for~\eqref{Eq:FormConsensusProtocolB}.
\end{corollary}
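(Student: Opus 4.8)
The plan is to reduce both formation update rules to the consensus protocols already analyzed by means of a translation of coordinates, and then invoke Theorems~\ref{Th:ConsensusA} and~\ref{Th:ConsensusB} verbatim on the transformed system. First I would introduce the displacement error $x_i := z_i - z_i^*$, where $z^* = [z_1^* \ \cdots \ z_n^*]^T$ is the feasible target configuration guaranteed by hypothesis, and set $x = [x_1 \ \cdots \ x_n]^T$. Since $z^*$ is a constant vector, $\dot{x}_i = \dot{z}_i$, and feasibility of the displacement requirement means $d_{ji}^* = z_j^* - z_i^*$, so that
\begin{equation*}
z_j(t) - z_i(t) - d_{ji}^* = \bigl(z_j(t) - z_j^*\bigr) - \bigl(z_i(t) - z_i^*\bigr) = x_j(t) - x_i(t).
\end{equation*}
Substituting this identity into~\eqref{Eq:FormConsensusProtocolA} gives $\hat{e}_{ij} = \sqrt{a_{ij}}(x_j - x_i) = e_{ij}$, so the error $x$ obeys exactly the consensus dynamics~\eqref{ConsensusDynamicA} driven by~\eqref{Eq:ConsensusProtocolA} with state $x$ and gains $\kappa_i = \kappa$; likewise, substituting into~\eqref{Eq:FormConsensusProtocolB} gives $\hat{e}_i = \sum_{j \in \mathcal{N}_i(\mathcal{X}_{\sigma(t)})} a_{ij}(x_j - x_i) = e_i$, so $x$ obeys~\eqref{ConsensusDynamicB} driven by~\eqref{Eq:ConsensusProtocolB}.

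Next I would apply the earlier convergence results to the error variable $x$. For~\eqref{Eq:FormConsensusProtocolA}, choosing $\kappa$ as in Theorem~\ref{Th:ConsensusA} guarantees that $x(t)$ reaches the subspace $\{x : x_1 = \cdots = x_n\}$ in a predefined time $T(x_0) \leq T_c$, even over switching connected topologies; for~\eqref{Eq:FormConsensusProtocolB}, choosing $\kappa$ as in Theorem~\ref{Th:ConsensusB} gives the same conclusion on a static connected graph (and, by Theorem~\ref{Th:ConsensusB2}, at least fixed-time convergence under arbitrary switching). In every case there is a scalar $c \in \mathbb{R}$ with $x_i(t) = c$ for all $i$ and all $t \geq t_0 + T_c$, hence $z_i(t) = z_i^* + c$, and therefore $z_j(t) - z_i(t) = z_j^* - z_i^* = d_{ji}^*$ for all $i, j \in \mathcal{V}$ and $t \geq t_0 + T_c$; this is precisely the displacement-formation objective attained within the predefined time $T_c$.

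The one point that needs care — and the only conceptual obstacle — is the observation that \emph{consensus of the displacement errors already certifies the formation}: the common limit $c$ of the $x_i$ need not be zero (for~\eqref{Eq:FormConsensusProtocolA} it equals the average of the initial errors $z_i(t_0) - z_i^*$ by Theorem~\ref{Th:ConsensusA}), so the agents converge to a translate of $z^*$ rather than to $z^*$ itself, which is exactly what a displacement-based specification demands and hence requires no extra argument. A minor accompanying remark is that the predefined-time guarantee for~\eqref{Eq:FormConsensusProtocolB} is asserted only for static networks, consistently with Theorem~\ref{Th:ConsensusB}, whereas~\eqref{Eq:FormConsensusProtocolA} preserves it on dynamic networks via Theorem~\ref{Th:ConsensusA}. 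Beyond this the proof is a direct transcription of the consensus results through the change of variables.
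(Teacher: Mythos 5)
Your proposal is correct and follows essentially the same route as the paper: the change of variables $x = z - z^*$, the use of feasibility to rewrite $z_j - z_i - d_{ji}^*$ as $x_j - x_i$, and the reduction to Theorems~\ref{Th:ConsensusA} and~\ref{Th:ConsensusB}, concluding with the observation that convergence of $x$ to a common (not necessarily zero) value $\alpha\mathbf{1}$ still yields the displacement requirement. Your version is in fact slightly more careful than the paper's, spelling out the feasibility identity explicitly and noting the static-versus-dynamic caveat for protocol~\eqref{Eq:FormConsensusProtocolB}.
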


\begin{proof}
The proof follows by noticing that the dynamic of $x=z-z^*$ for $\dot{z}_i$ given in~\eqref{Eq:FormConsensusProtocolA}, coincides with the dynamics of~\eqref{Eq:ConsensusProtocolA} and the dynamic of $x=z-z^*$ for $\dot{z}_i$ given in~\eqref{Eq:FormConsensusProtocolB}, coincides with the dynamics of~\eqref{Eq:ConsensusProtocolB}. Thus, the displacement formation control is a consensus problem over the variable $x=z-z^*$. Thus, $x$ converges to $x=\alpha\mathbf{1}$ in a predefined-time $T(x_0)\leq T_c$, where $\alpha$ is a constant value. Therefore, $z$ converges to $z=z^*+\alpha\mathbf{1}$. Notice that $z$ satisfies the displacement requirement, since $z_j-z_i=z_j^*-z_i^*$ is satisfied.
\end{proof}

\begin{example}
\label{ExampleFormation}

Consider a system composed of $20$ agents placed with uniformly distributed random initial conditions over $[1,3]^2$ in the $xy-$plane, shown with red dots in Figure~\ref{Fig:formationTraj}. The displacement conditions $d_{ij}^*$ are given such that the agents achieve a formation as given by the blue dots in Figure~\ref{Fig:formationRef}. Two agents are connected if the distance between them is less or equal than a communication range of 0.5m. Notice that, as the agents move the connectivity graph changes. The formation control for each agent is designed as in~\eqref{Eq:FormConsensusProtocolA}, with predefined-time bound $T_c=1$ and $\lambda_2$ for the less connected case, which is a line graph. The convergence of the agents towards the formation is shown in green in Figure~\ref{Fig:formationTraj}. Notice that the agents converge to a formation where the displacement condition in Figure~\ref{Fig:formationRef} is satisfied but where a global position for the nodes is not predetermined.
\end{example}

\begin{figure}[t]
\begin{center}
\begin{subfigure}{7cm}
\includegraphics[width=7cm]{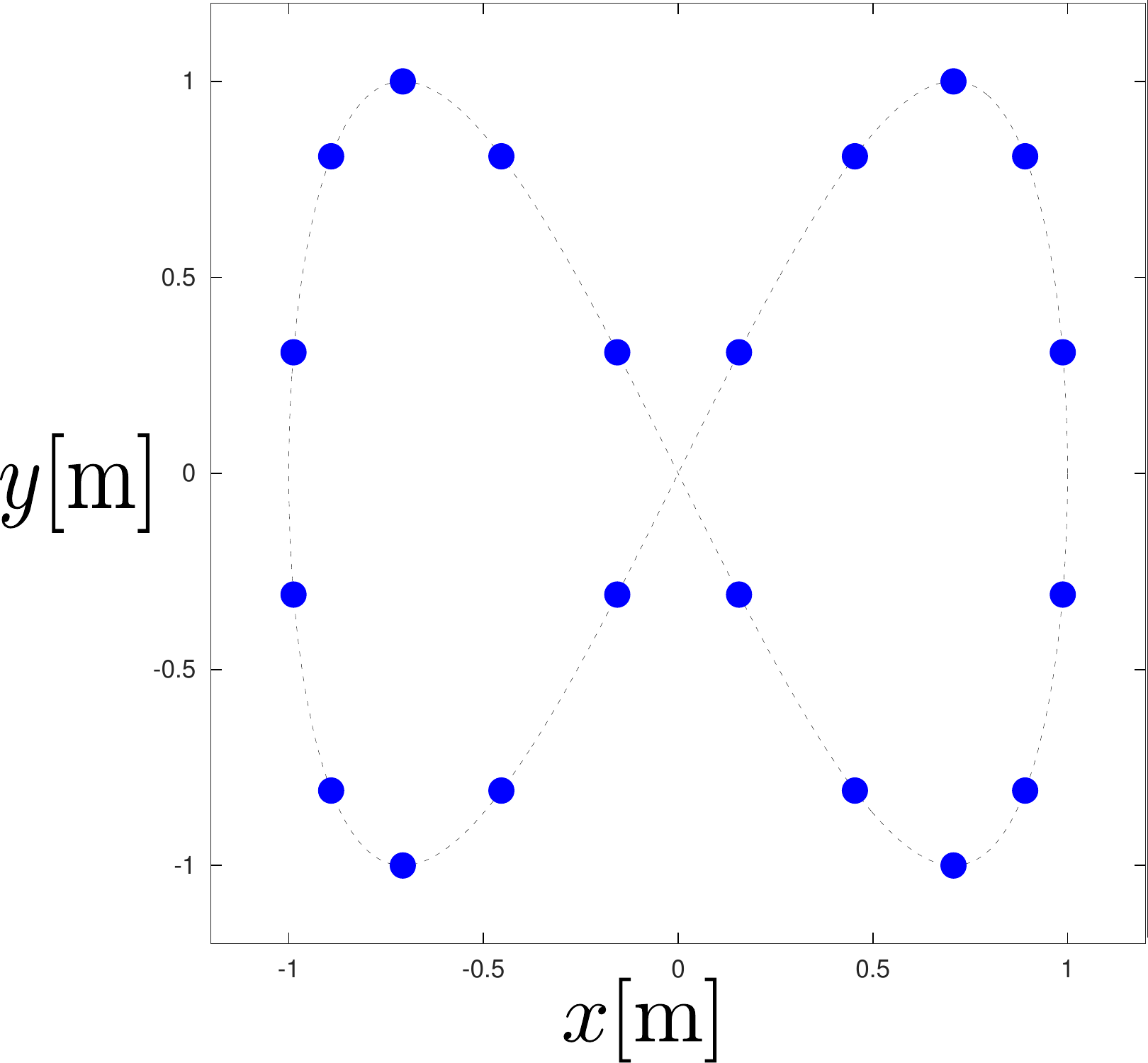}
\caption{A formation satisfying the displacement requirement for the Example \ref{ExampleFormation}}\label{Fig:formationRef}
\end{subfigure}\hspace{2cm}
\begin{subfigure}{7cm}
\includegraphics[width=7cm]{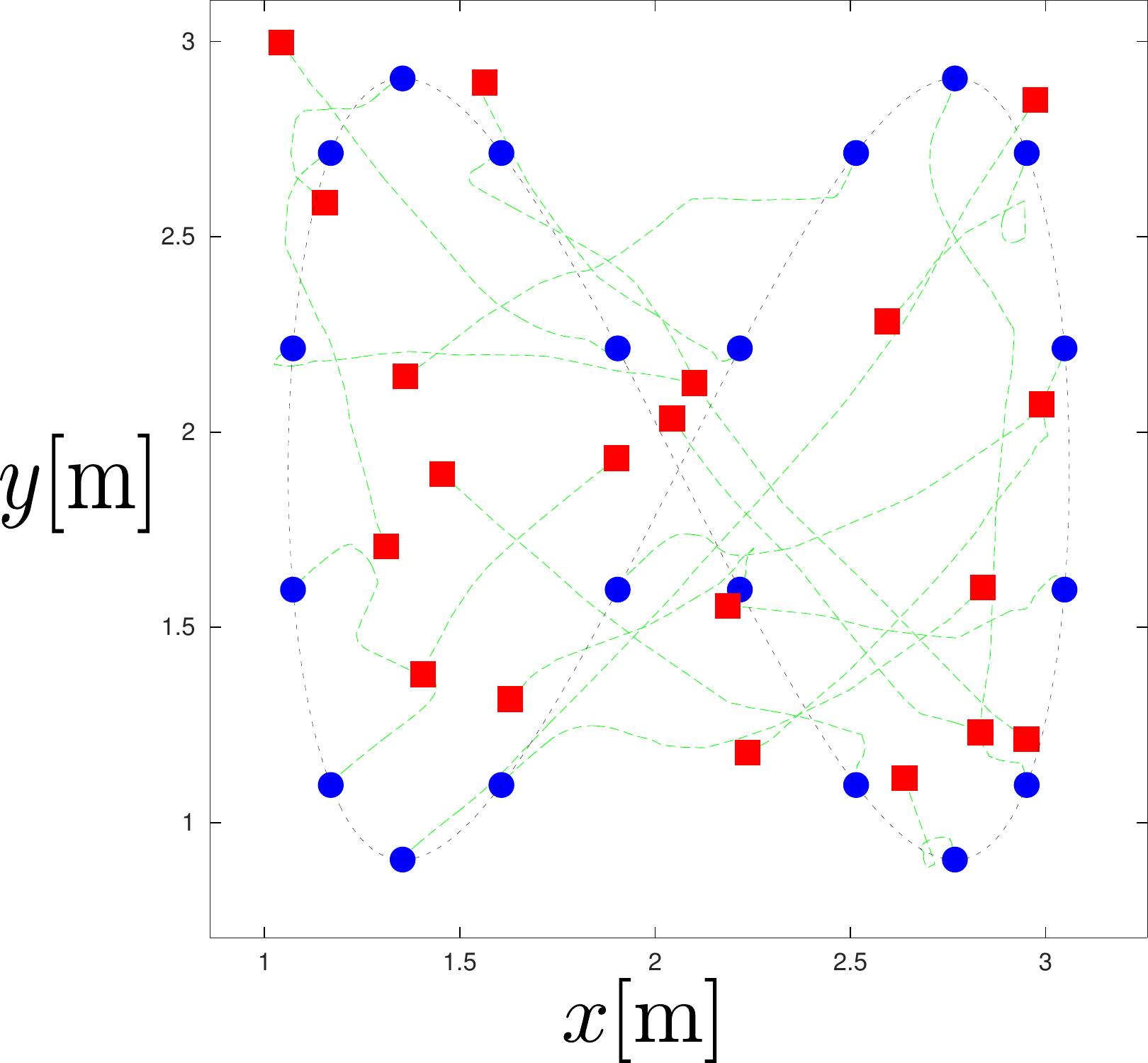}
\caption{Formation trajectories for the Example \ref{ExampleFormation}}\label{Fig:formationTraj}
\end{subfigure}
\end{center}
\end{figure}

\section{Conclusions and Future Work}
\label{Sec.Conclu}

A new class of consensus algorithms with predefined-time convergence have been introduced in this work. These results allow the design of a consensus protocol which solves the average consensus problem with predefined-time convergence, even under switching dynamic networks. A computationally simpler predefined-time consensus algorithm for fixed topologies was also proposed, with the trade-off that it does not converge to the average; moreover, an additional analysis proved that fixed-time convergence is also maintained under dynamic networks. These results were applied to the multi-agent formation control problem guaranteeing predefined-time convergence.
As future work, we consider the application of these results to provide predefined-time convergence to different consensus-based algorithms, such as distributed resource allocation~\cite{Xu2017,Xu2017b}.

\appendix
\section{Some useful inequalities}
In this appendix, we recall the inequalities used along the manuscript~\eqref{Th:ConsensusA} and~\eqref{Th:ConsensusB}. An interested reader may review~\cite{Mitrinovic1970,Hardy1934,Cvetkovski2012,Hardy1988}.



\begin{lemma}
\label{Lemma:Hardy}
\cite{Hardy1988}
Let $x=[x_1 \ \cdots \ x_n]^T\in\mathbb{R}^n$ and 
$$
\Vert x \Vert_p = \left(\sum_{i=1}^n|x_i|^p\right)^\frac{1}{p}
$$
then if $s>r>0$ then the following inequality holds
$$
\|x\|_s\leq\|x\|_r\leq n^{\frac{1}{r}-\frac{1}{s}}\|x\|_s.
$$
\end{lemma}



\end{document}